\newtheorem{theorem}{Theorem}%[section]
\newtheorem{corollary}[theorem]{Corollary}
\newtheorem{proposition}[theorem]{Proposition}
\newtheorem{lemma}[theorem]{Lemma}
\newcommand{\Z}{\mathbb Z}
\newcommand{\Q}{\mathbb Q}
\newcommand{\N}{\mathbb N}
\newcommand{\C}{\mathbb C}
\newcommand{\HH}{\mathbb H}
\newcommand{\zf}{\zeta_f}
\newcommand{\ra}{\rightarrow}
\newcommand{\mperL}{\mbox{MPer}_{L}(f)}
\newcommand{\SSS}{{\mathbb S}}
\author[V.F. Sirvent]
{V\'{\i}ctor F. Sirvent}
\address{ 
Departamento de Matem\'aticas, Universidad Cat\'olica del Norte, Antofagasta, Chile}
\email{victor.sirvent@ucn.cl}
\title[Periodic structure of maps on the product of spheres]{On the periodic structure of $C^1$ self-maps on the product of spheres of different dimensions} 
\date{\today}
\keywords{Lefschetz numbers, periodic point, Lefschetz zeta function, product of spheres, qusi-unipotent maps, Morse-Smale diffeomorphisms, hyperbolic periodic points, transversal maps, Lie groups}
\subjclass[2010]{37C25,55M20,37C30,37D15}
\begin{document}

\begin{abstract}

In the present article we study the periodic structure of some well-known classes of $C^1$ self-maps on the product of spheres of different dimensions: 
transversal maps, Morse-Smale diffeomorphisms and maps with all its periodic points hyperbolic. 
Our approach is via the Lefschetz fixed point theory.
We give a complete characterization of the minimal set of Lefschetz periods for Morse-Smale diffeomorphisms on these spaces.
We also consider $C^1$ maps with all its periodic points hyperbolic and  we give  conditions for these maps to have infinitely many periodic points.
We describe the period set of the transversal maps on these spaces.
Finally we applied these results to describe the periodic structure of similar classes of maps on compact, connected, simply connected  and simple Lie groups.
\end{abstract}

\maketitle

\section{Introduction}\label{s:introduction}

The study of the periodic structure of a map (i.e. periodic points, periodic orbits
and their corresponding periods) is an important subject in the theory of dynamical systems. 
Indeed, in many situations the knowledge of the periodic structure helps to understand other (global) properties like chaotic behavior, entropy, etc.
There are different approaches and techniques for this study, one of the most well-known and used is the Lefschetz  fixed point theory, see the monograph~\cite{JM}, for a comprehensive exposition  of the subject.

\smallskip

In the present paper we consider self-maps on the product of spheres of different dimensions, i.e. $\SSS^{n_1}\times\cdots\times\SSS^{n_l}$ with $1\leq n_1<\cdots<n_l$. 
Our approach is via the Lefschetz  fixed point theory.
The articles~\cite{LL-S:2018,Si:2020}  studied self-maps on products of spheres of different  dimension in the context of period-free maps, i.e. the maps do not have periodic points.  
In the present article we compute  the Lefschetz zeta function for these maps and use it to describe the dynamics of different and well-known classes of differentiable maps on these spaces: Morse-Smale diffeomorphisms, maps with all its periodic points hyperbolic and transversal maps.

\smallskip
The structure of the article is as follows:
In section~\ref{s:lzf} we compute  (and give an explicit and  closed formula for) the Lefschetz zeta function  for continuous self maps on these spaces (Theorem~\ref{thm:lzf}). We use this result in the following sections.
In section~\ref{s:mperl} we consider the minimal sets of Lefschetz periods for Morse-Smale diffeomorphisms on these spaces and give a complete characterization of this set in Theorem~\ref{thm:mperl}. 
This theorem generalizes previous known results in~\cite{CC-LL}, where criteria are given for self-maps on the product of two spheres of different dimensions.
In section~\ref{s:hyperbolic} we consider  $C^1$ maps with all its periodic points hyperbolic and in Theorem~\ref{thm:pp} we give sufficient conditions for these maps to have infinitely many periodic points.
In section~\ref{s:transversal} we consider transversal maps on these spaces. In Lemma~\ref{lemma:transversal} and Theorem~\ref{thm:main-transversal} we give an extensive characterization of the elements of the period set of transversal maps on the spaces considered here.
Theorem~\ref{thm:main-transversal} generalizes previous results presented  in~\cite{LL-S:2019} for the case of transversal maps on a sum-free product of spheres (i.e. when all the homology groups are trivial or one-dimensional). 
The proofs in the present article are simpler than the ones in~\cite{LL-S:2019}.
We conclude the article with section~\ref{s:remarks} where some applications, remarks and open questions are presented.
One of the main applications of the results of this article is the study of the periodic structure of $C^1$ maps on some Lie groups. This is done via Hopf's Theorem~(Theorem~\ref{thm:hopf}) which states these spaces have the same homology as a product of spheres of different odd dimensions. In this way the Corollaries~\ref{coro:ms-lie}, \ref{coro:pp-lie} and \ref{coro:transversal-lie} follows from the main theorems of the article.

\section{The Lefschetz numbers and the Lefschetz zeta function}\label{s:lzf}

Let $X$ be a compact topological manifold of dimension $N$ and $f:X\to X$ a continuous map, and 
$f_{*k}:H_k(X,\Q)\to H_k(X,\Q)$ the induced maps on the $k$th-rational homology groups of $X$, with $0\leq k\leq N$.
The \emph{Lefschetz number} $L(f)$
is defined as
\begin{equation*}\label{number}
L(f):= \sum\limits_{k=0}^{N}(-1)^{k} \text{trace}(f_{*k}).
\end{equation*}

The \emph{Lefschetz zeta function} of $f$ is the exponential generating function of the Lefschetz numbers of the iterates of $f$, i.e.
$$
\zf(t):=\exp\left(\sum_{m=1}^{\infty}  \frac{L(f^m)}{m}t^m\right).
$$

It  can be expressed formally as an infinite Euler product~(\emph{cf.}~\cite{dold}), of the form:
\begin{equation}\label{eqn:lzf-euler}
\zf(t)=\prod_{m\geq 1}(1-t^m)^{-\dfrac{\ell(f^m)}{m}},
\end{equation}
where $\ell(f^m)$ is the \emph{Lefschetz number of period $m$}, 
defined as
\begin{equation}\label{eqn:l-chica}
\ell(f^m):=\sum_{r|m}\mu(r) L(f^{m/r})=\sum_{r|m}\mu({m/r}) L(f^r),
\end{equation}
where $\mu$ is the classical \emph{M\"obius function}, i.e.
 $$
\mu(m):=\left\{\begin{array}{ll}
               1 & \mbox{ if } m=1,\\
               0 & \mbox{ if } k^2|m \mbox{ for some } k\in\N, \\
              (-1)^r & \mbox{ if } m=p_1\cdots p_r  \mbox{ has distinct  primes factors.}
              \end{array}
       \right.
$$
By the M\"obius inversion formula~(\emph{cf.}~\cite{hardy-wright}  Theorem 266):
$$
L(f^m)=\sum_{r|m} \ell(f^r).
$$
The Lefschetz zeta function is a rational function since it can be written as
%There is an alternative way to compute the Lefschetz zeta function:
\begin{equation}\label{eqn:zeta-product}
\zf(t)=\prod_{k=0}^N\det(Id_{*k}-t f_{*k})^{(-1)^{k+1}},
\end{equation}
where $N=\dim M$,
%$n_k=\dim H_k(M,\Q)$, 
$Id:=Id_{*k}$ is the
identity map on $H_k(M,\Q)$, and by convention $\det(Id_{*k}-t
f_{*k})=1$ if $\dim H_k(M,\Q)=0$, for more details see~\cite{franks2}.

Throughout the article we consider the space a product of spheres of different dimensions, i.e.
$X=\SSS^{n_1}\times\cdots\times\SSS^{n_l}$ with $1\leq n_1<\cdots<n_l$.
The homology groups of $X$ are obtained by using  the
K\"unneth Theorem~(\emph{cf.}~\cite{hatcher}), we get
$$
H_k(X,\Q)=\underbrace{\Q\oplus\cdots\oplus\Q}_{b_k}.
$$
where $b_k$ is the number of ways that $k$ can be written by summing up subsets of $(n_{i_1},\cdots,n_{i_l})$.
The numbers $b_k$ are called the \emph{Betti numbers} of the space $X$ and they are the coefficients  of the Poincar\'e polynomial of the space.

Let $f:X\to X$ be a continuous map;
if the homology groups $H_{n_j}(X,\Q)$, with $1\leq j\leq l$, are one dimensional then 
the corresponding induced maps on homology are
$f_{*n_j}=(a_{n_j})$ 
where the numbers $a_{n_j}$ are integers. 

\begin{proposition}[\cite{Si:2020}]\label{prop:ev}
%Let $f:X\to X$ be a continuous map and 
Let $H_k(X,\Q)$  be non-trivial.
If $\lambda$ is an eigenvalue of $f_{*k}$ then
$\lambda=a_{n_{i_1}}\cdots a_{n_{i_s}}$, where
$n_{i_1}+\cdots+n_{i_s}=k$.
%where $a_{n_j}=f_{*n_j}$ with $1\leq j\leq l$.
\end{proposition}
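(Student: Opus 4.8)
The plan is to exploit the multiplicative structure of the rational cohomology together with the fact that $f$ induces a ring homomorphism on it. By the K\"unneth theorem, $H^*(X,\Q)$ is isomorphic as a graded $\Q$-algebra to $\bigotimes_{j=1}^{l}\Q[x_j]/(x_j^2)$, where $x_j$ has degree $n_j$ and is dual to the fundamental class of the $j$-th sphere factor. A basis of $H^k(X,\Q)$ is then given by the monomials $x_{i_1}\cdots x_{i_s}$ with $i_1<\cdots<i_s$ and $n_{i_1}+\cdots+n_{i_s}=k$; there are exactly $b_k$ of them, in agreement with the Betti number computed above.

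First I would record that the induced map $f^*$ on cohomology is a homomorphism of graded algebras, since the cup product is natural. Because $H^{n_j}(X,\Q)$ is one-dimensional by hypothesis, $f^*$ must send the generator $x_j$ to a scalar multiple of itself, say $f^*(x_j)=a_{n_j}\,x_j$; the universal coefficient theorem over the field $\Q$ identifies $f^*$ on $H^{n_j}$ with the transpose of $f_{*n_j}$, so for a $1\times 1$ matrix this scalar is exactly the integer $a_{n_j}$ already attached to $f_{*n_j}$.

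Next, using multiplicativity I would compute the action of $f^*$ on an arbitrary basis monomial:
\[
f^*(x_{i_1}\cdots x_{i_s})=f^*(x_{i_1})\cdots f^*(x_{i_s})=\bigl(a_{n_{i_1}}\cdots a_{n_{i_s}}\bigr)\,x_{i_1}\cdots x_{i_s}.
\]
Hence $f^*$ is diagonal in the monomial basis of $H^k(X,\Q)$, with diagonal entries $a_{n_{i_1}}\cdots a_{n_{i_s}}$ indexed by the subsets $\{n_{i_1},\dots,n_{i_s}\}$ summing to $k$, and its eigenvalues are precisely these products. Since $f_{*k}$ is the transpose of $f^*$ on $H^k(X,\Q)$, it has the same characteristic polynomial and therefore the same eigenvalues, which yields the statement: every eigenvalue $\lambda$ of $f_{*k}$ equals $a_{n_{i_1}}\cdots a_{n_{i_s}}$ for some subset with $n_{i_1}+\cdots+n_{i_s}=k$.

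The main obstacle---indeed essentially the only nontrivial point---is justifying that $f^*(x_j)$ is a scalar multiple of $x_j$, which rests entirely on the one-dimensionality of $H^{n_j}(X,\Q)$; without this hypothesis $f^*$ could mix distinct degree-$n_j$ generators and its action on $H^k(X,\Q)$ would no longer be diagonal. A minor point of care is the sign bookkeeping in the graded-commutative product when some $n_j$ are even, but since each sphere contributes the relation $x_j^2=0$ rationally regardless of parity, the monomials indexed by subsets still form a basis and the displayed computation is unaffected.
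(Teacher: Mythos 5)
There is a genuine gap, and it is exactly at the point you single out as the crux. You take as a hypothesis that each $H^{n_j}(X,\Q)$ is one-dimensional, but the proposition does not grant this: the standing assumptions are only $n_1<\cdots<n_l$ and $H_k(X,\Q)$ non-trivial. Whenever some $n_i$ equals a sum of smaller sphere dimensions, the group $H^{n_i}(X,\Q)$ has dimension at least $2$; the paper's own example $X=\SSS^1\times\SSS^2\times\SSS^3$ has $\dim H_3(X,\Q)=2$, and Proposition~\ref{prop:ev} is invoked in precisely this generality (it feeds into formula~(\ref{eqn:lzf-prod1}) and into the definition of basic eigenvalues when $\dim H_{n_i}(X,\Q)=d+1$). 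In that situation your key step fails: one can only say $f^*(x_3)=a_3x_3+c\,x_1x_2$, so $f^*$ may mix the spherical generator with decomposable classes and is not diagonal in the monomial basis. Moreover the relation $x_3^2=0$ imposes no constraint on $c$, since in the square the cross terms cancel by graded commutativity; so $c$ genuinely can be nonzero, and you cannot argue it away. (The paper itself gives no proof, citing \cite{Si:2020}, so the comparison here is with the argument needed for the stated generality.)

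The repair keeps your ring-homomorphism strategy but replaces diagonality by triangularity with respect to the filtration by monomial length. Define $a_{n_j}$ as the coefficient of $x_j$ in $f^*(x_j)$, which matches the paper's definition of the basic eigenvalues; every remaining term of $f^*(x_j)$ is a monomial of length at least $2$ in generators $x_i$ with $i<j$. Multiplicativity then gives
\[
f^*(x_{i_1}\cdots x_{i_s})=\bigl(a_{n_{i_1}}\cdots a_{n_{i_s}}\bigr)x_{i_1}\cdots x_{i_s}+(\text{combination of monomials of length}>s),
\]
because substituting a decomposable term for any factor raises the total length (or kills the product outright when an index repeats, by $x_i^2=0$). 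Ordering the monomial basis of $H^k(X,\Q)$ by increasing length therefore makes the matrix of $f^*$ triangular, with the products $a_{n_{i_1}}\cdots a_{n_{i_s}}$, $n_{i_1}+\cdots+n_{i_s}=k$, on the diagonal, which is the eigenvalue statement; passing to homology by transposition over the field $\Q$ is harmless, as you correctly note. With this modification the rest of your write-up stands, and in the sum-free case (all $H_{n_j}(X,\Q)$ one-dimensional, the setting of \cite{LL-S:2019}) your diagonal argument is correct as written.
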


We say that the numbers $a_{n_1},\ldots,a_{n_l}$ are the 
\emph{basic eigenvalues of $f_{*}$}, if $f_{*n_i}=(a_{n_i})$ when $H_{n_i}(X,\Q)$ is one dimensional, for $1\leq i\leq l$; or 
$$
f_{*n_i}=\left(\begin{array}{cccc}
a_{n_i} & c(i)_{1,2} &\cdots &c(i)_{1,d+1}\\
0 & a_{n_{j_{1_1}}}\cdots a_{n_{j_{1_r}}} & \cdots &c(i)_{2,d+1}\\
0&0 &\vdots & c(i)_{d,d+1}\\
 0& 0 & \cdots & a_{n_{j_{d_1}}}\cdots a_{n_{j_{d_{r'}}}}
\end{array}
\right),
$$
with $c(i)_{r,t}\in\Z$,
when  the dimension of $H_{n_i}(X,\Q)$ is $d+1$ and
$$n_{j_{1_1}}+\cdots +n_{j_{1_r}}=\cdots=n_{j_{d_1}}+\cdots +n_{j_{d_{r'}}}=n_i,$$ with
$1\leq j_{1_1}<\cdots<j_{i_r}<i$, $\ldots$, $1\leq j_{d_1}<\cdots<j_{d_{r'}}<i$, and $d\geq1$.
In other words the eigenvalues of the induced map on homology are given by the 
basic eigenvalues and some multiplications of the basic eigenvalues,
for more details see~\cite{Si:2020}.
We can say that these numbers are the most basic information of the map, since from them we can obtain many homological invariants of the the map as we shall show in throughout the article. 
The following proposition states that the Lefschetz numbers of $f$ are given by a product, whose factors involves the basic eigenvalues:

\begin{proposition}[\cite{Si:2020}]\label{prop:L-numbers}
Let $X=\SSS^{n_1}\times\cdots\times\SSS^{n_l}$ with $1\leq n_1<\cdots<n_l$, 
and $f:X\to X$ be a continuous map, such that 
$a_{n_1},\ldots, a_{n_l}$, are the basic eigenvalues of $f_{*}$.
Then
\begin{equation}\label{eqn:L-m}
L(f^m)=\left(1+(-1)^{n_1}a_{n_1}^m\right)\cdots \left(1+(-1)^{n_l}a_{n_l}^m\right).
\end{equation}
\end{proposition}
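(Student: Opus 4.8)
The plan is to compute the Lefschetz number directly from its definition $L(f^m)=\sum_{k=0}^{N}(-1)^k\,\text{trace}\big((f^m)_{*k}\big)$, exploiting the functoriality identity $(f^m)_{*k}=(f_{*k})^m$ together with the multiplicative structure of the eigenvalues supplied by Proposition~\ref{prop:ev}. First I would fix the basis of $H_*(X,\Q)$ coming from the K\"unneth theorem: writing $\alpha_i$ for a generator of $H_{n_i}(\SSS^{n_i},\Q)$, every homology class of $X$ is a product $\alpha_S:=\prod_{i\in S}\alpha_i$ indexed by a subset $S\subseteq\{1,\dots,l\}$, and its degree is $\sum_{i\in S}n_i$. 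This makes the correspondence between basis elements and subsets explicit and recovers the description of the Betti numbers $b_k$ given above.

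The key step is to evaluate $\text{trace}\big((f^m)_{*k}\big)$. Since the eigenvalues of a matrix power are the $m$-th powers of the eigenvalues, Proposition~\ref{prop:ev} together with the prescribed block form of the basic eigenvalues shows that $f_{*k}$ is represented by a triangular (indeed block-diagonal) matrix whose diagonal entries are exactly the products $\prod_{i\in S}a_{n_i}$ over the subsets $S$ with $\sum_{i\in S}n_i=k$, each occurring once. Hence
\[
\text{trace}\big((f^m)_{*k}\big)=\sum_{\substack{S\subseteq\{1,\dots,l\}\\ \sum_{i\in S}n_i=k}}\ \prod_{i\in S}a_{n_i}^{\,m}.
\]

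Substituting this into the definition of $L(f^m)$ and interchanging the two summations, the outer sum over $k$ disappears, because each subset $S$ contributes to the single degree $k=\sum_{i\in S}n_i$, and we are left with one sum over all subsets $S$, in which the sign $(-1)^k=(-1)^{\sum_{i\in S}n_i}=\prod_{i\in S}(-1)^{n_i}$ factors across $S$:
\[
L(f^m)=\sum_{S\subseteq\{1,\dots,l\}}\ \prod_{i\in S}\big((-1)^{n_i}a_{n_i}^{\,m}\big).
\]
Recognizing the right-hand side as the expanded product in which each factor contributes either $1$ (when $i\notin S$) or $(-1)^{n_i}a_{n_i}^{m}$ (when $i\in S$) yields $\prod_{i=1}^{l}\big(1+(-1)^{n_i}a_{n_i}^{m}\big)$, as claimed.

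The main obstacle is the middle step: one must verify that the trace of $(f^m)_{*k}$ picks up each product $\prod_{i\in S}a_{n_i}^{\,m}$ exactly once and nothing else. This requires the full strength of the basic-eigenvalue normal form (not merely that the eigenvalues have the stated shape), so that multiplicities are accounted for and no off-diagonal terms contribute to the trace; the bijection between subsets $S$ of weight $k$ and the diagonal entries, which matches the count $b_k$, is precisely what makes the bookkeeping come out right.
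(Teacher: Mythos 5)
Your proof is correct, but note that the paper itself contains no proof of this proposition: it is imported verbatim from~\cite{Si:2020}, so there is no in-text argument to compare against. Your derivation is the natural one and almost certainly the intended one: expand $L(f^m)=\sum_k(-1)^k\mathrm{trace}\bigl((f_{*k})^m\bigr)$ over the K\"unneth basis indexed by subsets $S\subseteq\{1,\dots,l\}$, use that the eigenvalues of $f_{*k}$ are the products $\prod_{i\in S}a_{n_i}$ over subsets of weight $k$, interchange the sums, and factor; the sign bookkeeping $(-1)^k=\prod_{i\in S}(-1)^{n_i}$ and the degenerate cases $k=0$ and $k=N$ all check out. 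One small caution: your parenthetical claim that $f_{*k}$ is ``indeed block-diagonal'' overstates what holds in general. A continuous self-map of the product need not respect the product structure, and the correct general statement (which is what~\cite{Si:2020} establishes, via the fact that $f^*$ is a ring homomorphism on cohomology, so $f^*$ sends each cup-product generator $\alpha_S$ to $\bigl(\prod_{i\in S}a_{n_i}\bigr)\alpha_S$ plus terms triangular with respect to a suitable ordering) is only triangularity of $f_{*k}$ with the subset products on the diagonal. Since the trace of $(f_{*k})^m$ depends only on the eigenvalue multiset, this weaker statement suffices, and you correctly identify in your final paragraph that Proposition~\ref{prop:ev} alone is not enough --- it constrains which values eigenvalues can take but not their multiplicities, so the full normal form from~\cite{Si:2020} is genuinely needed to guarantee each subset of weight $k$ contributes exactly once, matching $b_k$.
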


From Proposition~\ref{prop:ev} and the equality~(\ref{eqn:zeta-product}),
the Lefschetz zeta function of a map satisfying the  hypothesis of Proposition~\ref{prop:L-numbers}, can be written as

\begin{equation}\label{eqn:lzf-prod1}
\zf(t)=(1-t)^{-1}\prod_{k=1}^N\left(\prod_{j_1+\cdots+j_r=k} (1-a_{n_{j_1}}\cdots a_{n_{j_r}}t)^{(-1)^{k+1}} \right),
\end{equation}
where $N=n_1+\cdots+n_l$, i.e.  the dimension of the space $X$.
However this formula is not very explicit since we need to find the suitable indexes $j_1,\ldots,j_r$ such that $j_1+\cdots+j_r=k$ (notice that are as many elements as the dimension of $H_k(X,\Q)$, i.e. the $k$-th Betti number) which it is computational expensive, when the Betti numbers are large.

In the following theorem we give an explicit formula of the Lefschetz zeta function in terms of the basic eigenvalues of the map, in order to obtain it we use 
the expression~(\ref{eqn:L-m}) of the Lefschetz numbers.
The formula~(\ref{eqn:lzf-product}) is more computational efficient to obtain than~(\ref{eqn:lzf-prod1}), since it only requires to order the numbers $a_{n_i}$ according to their indices.

\begin{theorem}\label{thm:lzf}
Let $X=\SSS^{n_1}\times\cdots\times\SSS^{n_l}$ with $1\leq n_1<\cdots<n_l$, 
and let $f:X\to X$ be a continuous map, such that 
$a_{n_1},\ldots, a_{n_l}$, are the basic eigenvalues of $f_{*}$.
Then

\begin{equation}\label{eqn:lzf-product}
\zf(t)=\dfrac{1}{1-t}\,\prod_{k=1}^l\left(\prod_{i_{1}<\cdots <i_{k}}(1-a_{n_{i_1}}\cdots a_{n_{i_k}}t)^{(-1)^{n_{i_1}+\cdots +n_{i_k}-1}}\right).
\end{equation}
\end{theorem}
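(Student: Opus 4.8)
The plan is to compute the exponent $\sum_{m\geq 1} L(f^m)t^m/m$ in closed form directly from the product expression~(\ref{eqn:L-m}) and then exponentiate. First I would expand the product in~(\ref{eqn:L-m}) by distributing over the $l$ factors. Writing each factor as $1+(-1)^{n_i}a_{n_i}^m$, the expansion is naturally indexed by subsets $S\subseteq\{1,\ldots,l\}$, namely
$$
L(f^m)=\sum_{S\subseteq\{1,\ldots,l\}}(-1)^{\sum_{i\in S}n_i}\Big(\prod_{i\in S}a_{n_i}\Big)^{m},
$$
where the empty subset contributes the constant term $1$ (i.e. the factor $(A_S)^m$ with $A_S=1$).

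Next I would substitute this into the defining exponent and interchange the finite sum over subsets with the infinite sum over $m$. For each subset $S$, setting $A_S:=\prod_{i\in S}a_{n_i}$, the inner sum is the standard logarithmic series $\sum_{m\geq 1}(A_S t)^m/m=-\log(1-A_S t)$, with the empty subset giving $-\log(1-t)$. Hence
$$
\sum_{m\geq 1}\frac{L(f^m)}{m}t^m=-\log(1-t)-\sum_{\emptyset\neq S}(-1)^{\sum_{i\in S}n_i}\log(1-A_S t).
$$
Exponentiating and using the identity $-(-1)^{e}=(-1)^{e-1}$ on the exponents, I obtain
$$
\zf(t)=\frac{1}{1-t}\prod_{\emptyset\neq S}(1-A_S t)^{(-1)^{\sum_{i\in S}n_i-1}}.
$$
Finally I would regroup the product over nonempty subsets according to their cardinality $k=|S|$ with $1\leq k\leq l$: a subset of size $k$ is precisely a strictly increasing tuple $i_1<\cdots<i_k$, for which $A_S=a_{n_{i_1}}\cdots a_{n_{i_k}}$ and $\sum_{i\in S}n_i=n_{i_1}+\cdots+n_{i_k}$. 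This rewriting yields exactly formula~(\ref{eqn:lzf-product}).

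I do not expect any genuine obstacle here: the computation is a purely formal power-series manipulation, so convergence plays no role and the interchange of summations is automatic in the ring of formal series. The only points demanding care are the sign bookkeeping—checking that the exponent $-(-1)^{\sum_{i\in S}n_i}$ coincides with $(-1)^{n_{i_1}+\cdots+n_{i_k}-1}$—and verifying that the empty-subset contribution produces precisely the prefactor $(1-t)^{-1}$ rather than being silently folded into the product over $k$.
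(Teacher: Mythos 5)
Your proof is correct, and the sign bookkeeping checks out: expanding~(\ref{eqn:L-m}) over subsets $S\subseteq\{1,\ldots,l\}$ gives $L(f^m)=\sum_S(-1)^{\sum_{i\in S}n_i}A_S^m$ with $A_S=\prod_{i\in S}a_{n_i}$, the interchange of the finite subset sum with the sum over $m$ is legitimate in the ring of formal power series, each inner sum is $-\log(1-A_St)$, and the identity $-(-1)^e=(-1)^{e-1}$ together with the bijection between $k$-subsets and increasing tuples $i_1<\cdots<i_k$ delivers~(\ref{eqn:lzf-product}), with the empty subset correctly isolated as the factor $(1-t)^{-1}$. This is, however, a genuinely different organization from the paper's argument, which proceeds by induction on $l$: the paper peels off the last sphere, observing that the exponent splits so that $\zf(t)=\zeta_g(t)\,\zeta_g(a_{n_l}t)^{(-1)^{n_l}}$, where $g$ is a self-map of $\SSS^{n_1}\times\cdots\times\SSS^{n_{l-1}}$ with basic eigenvalues $a_{n_1},\ldots,a_{n_{l-1}}$, and then invokes the inductive hypothesis and regroups. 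Your one-shot subset expansion is arguably cleaner and makes the combinatorial structure of the final product transparent without any inductive bookkeeping; what the paper's route buys in exchange is the functional relation $\zf(t)=\zeta_g(t)\,\zeta_g(a_{n_l}t)^{(-1)^{n_l}}$ itself, a recursive factorization of the zeta function that is of independent interest (the single-sphere base case of that induction is exactly your logarithmic-series computation specialized to $l=1$). Either argument is acceptable; yours would even shorten the exposition.
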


\begin{proof}
The proof follows by induction on $l$:
If $l=1$, the formula~(\ref{eqn:lzf-product}) is the well-known expression of the Lefschetz zeta function of a self-map on a sphere of dimension $n$, for completeness we show the computation here: Let $f_{*n}=(a_n)$, 
\begin{eqnarray*}
\zf(t)&=&\exp\left(\sum_{m=1}^{\infty}  \frac{L(f^m)}{m}t^m\right)=\exp\left(\sum_{m=1}^{\infty}  \frac{(1+(-1)^na_n^m)}{m}t^m\right)\\
  &=& \exp\left(\sum_{m=1}^{\infty} \frac{t^m}{m}\right) \exp\left(\sum_{m=1}^{\infty} \frac{(-1)^n(a_nt)^m)}{m}\right)\\
  &=&\left(1-t\right)^{-1}\left(1-a_nt\right)^{{(-1)}^{n-1}}.
\end{eqnarray*}

Let $f$ be a map, satisfying the hypothesis of the Theorem~\ref{thm:lzf}, so
\begin{eqnarray*}
\zf(t)&=&\exp\left(\sum_{m=1}^{\infty}  \frac{L(f^m)}{m}t^m\right)\\
 &=& \exp\left(\sum_{m=1}^{\infty}(1+(-1)^{n_1}a_{n_1}^m)\cdots (1+(-1)^{n_l}a_{n_l}^m)\dfrac{t^m}{m}\right)\\
 %&=& exp\left(\sum_{m=1}^{\infty}(1+(-1)^{n_1}a_{n_1}^m)\cdots (1+(-1)^{n_{l-1}}a_{n_{l-1}}^m)\right)\dfrac{t^m}{m}
  &=& \exp\left(\sum_{m=1}^{\infty}\left(\prod_{i=1}^{l-1}(1+(-1)^{n_i}a_{n_i}^m)\right)\left( 1+(-1)^{n_l}a_{n_l}^m\right)\dfrac{t^m}{m}\right).
  \end{eqnarray*}
Hence
\begin{equation}
\zf(t)=\exp\left(\sum_{m=1}^{\infty}\left(\prod_{i=1}^{l-1}(1+(-1)^{n_i}a_{n_i}^m)\right)\dfrac{t^m}{m}\right)
  \exp\left((-1)^{n_l}\sum_{m=1}^{\infty}\left(\prod_{i=1}^{l-1}(1+(-1)^{n_i}a_{n_i}^m)\right)\dfrac{(a_{n_l}t)^m}{m}\right).\label{eqn:induction}
\end{equation}

Let $g$ be a continuous self-map on $\SSS^{n_1}\times\cdots\times\SSS^{n_{l-1}}$ with
$a_{n_1},\ldots, a_{n_{l-1}}$ as basic eigenvalues of $g_*$, 
from the definition of the Lefschetz zeta function and Proposition~\ref{prop:L-numbers} it follows
$$
\zeta_g(t)= \exp\left(\sum_{m=1}^{\infty}\left(\prod_{i=1}^{l-1}(1+(-1)^{n_i}a_{n_i}^m)\right)\dfrac{t^m}{m}\right).
$$
So the identity~(\ref{eqn:induction})  can be written as $\zf(t)=\zeta_g(t)\zeta_g(a_{n_l}t)^{(-1)^{n_l}}$.

The inductive hypothesis states
$$
\zeta_g(t)=\dfrac{1}{1-t}\,\prod_{k=1}^{l-1}\left(\prod_{i_{1}<\cdots <i_{k}}(1-a_{n_{i_1}}\cdots a_{n_{i_k}}t)^{(-1)^{n_{i_1}+\cdots +n_{i_k}-1}}\right).
$$
Consider the product 
\begin{multline*}
\zeta_g(t)\zeta_g(a_lt)^{(-1)^{n_l}}=
\dfrac{\,\left(\prod_{k=1}^{l-1}\prod_{i_{1}<\cdots <i_{k}}(1-a_{n_{i_1}}\cdots a_{n_{i_k}}t)^{(-1)^{n_{i_1}+\cdots +n_{i_k}-1}}\right)}{1-t} \times\\
\times\dfrac{\,\left(\prod_{k=1}^{l-1}\prod_{i_{1}<\cdots <i_{k}}(1-a_{n_{i_1}}\cdots a_{n_{i_k}}(a_{n_l} t)^{(-1)^{n_{i_1}+\c1dots +n_{i_k}-1}}\right)^{(-1)^{n_l}}}{(1-a_{n_l}t)^{(-1)^{n_l}}}.
\end{multline*}
Reorganizing the terms in this expression, it yields
\begin{eqnarray*}
\zf(t) &=& \zeta_g(t)\zeta_g(a_{n_l}t)^{(-1)^{n_l}}\\
&=& \dfrac{1}{1-t}\left(\prod_{i=1}^l(1-a_{n_i}t)^{(-1)^{n_i-1}}\right)
\left(\prod_{k=2}^l\left(\prod_{i_{1}<\cdots <i_{k}}(1-a_{n_{i_1}}\cdots a_{n_{i_k}}t)^{(-1)^{n_{i_1}+\cdots +n_{i_k}-1}}\right)\right).
\end{eqnarray*}
Which is the identity~(\ref{eqn:lzf-product}). 
\end{proof}

We say that a map $f$ is  \emph{quasi-unipotent} if the eigenvalues of the $f_{*k}$ are roots of identity for all $k\in\{0,\ldots,N\}$.
These  maps are very important in dynamics since they are associated to the well-known Morse-Smale diffeomorphisms.
The Morse-Smale diffeomorphims are diffeomorphisms such that non-wandering set consists only of a finite number of hyperbolic periodic point,
whose stable and unstable manifolds intersect transversally, see~\cite{shub-sullivan} or~\cite{LL-S:2008} for a precise definition of these maps and the concepts mentioned.
M. Shub~(\cite{shub}) proved that the Morse-Smale diffeomorphisms  are quasi-unipotent.

For quasi-unipotent maps  the expression~(\ref{eqn:lzf-euler}) is a product of finite number of factors and the Lefschetz numbers are bounded.

If $f:\SSS^{n_1}\times\cdots\times\SSS^{n_l}\to \SSS^{n_1}\times\cdots\times\SSS^{n_l}$ with $n_1<\cdots<n_l$,  is  a quasi-unipotent map, then $a_k=\pm 1$; therefore $\det(Id_{*k}-tf_{*k})=(1\pm t)$, whenever $H_k(X,\Q)$ is not trivial (due to Proposition~\ref{prop:ev}).
By~(\ref{eqn:zeta-product}), it follows
 \begin{equation}\label{eqn:lzf-qu}
\zf(t)=(1-t)^\alpha(1+t)^\beta
\end{equation}
where $\alpha$ and $\beta$ are integers. 
The precise values of the exponents $\alpha$ and $\beta$ are computed in  section~\ref{s:remarks}.

\section{Minimal set of Lefschetz periods}\label{s:mperl}

The minimal set of Lefschetz  periods is an important set  for understanding the periodic structure of $C^1$ quasi-unipotent maps, in particular the Morse-Smale diffeomorphisms. 
For a detail description of the concept of the minimal set of Lefschetz periods see for example~\cite{LL-S:2008,LL-S:2013}.
In order to  the article be self-contained we give a brief introduction of this notion and its definition
in the following lines.

Let $X$ be a $C^1$ compact manifold and $f:X\ra X$ a $C^1$ map.
We say that $x$ is a \emph{hyperbolic point of} $f$ if $x$ is a periodic point  of period $p$   of $f$ (for some $p$) and  the eigenvalues of $Df^p(x)$ have modulus different from $1$.
Let $T_xM$ be the tangent space of $M$ at $x$, in this situation
 $T_xM=E_x^u\oplus E_x^s$, where $E_x^u$ the unstable space,
 i.e. the subspace of the tangent space
$T_x M$ generated by the eigenvectors of $Df^p(x)$ of modulus larger
than $1$; and  $E_x^s$ the stable space, defined in a similar manner.
The {\em index}  $u$ of the orbit of $x$ is the dimension of the 
unstable space at $x$.
We define the {\it orientation}
type $\Delta$ of the orbit of $x$ as $+1$ if $Df^p(x):E_x^u\ra E_x^u$
preserves orientation and $-1$ if reverses the orientation. 
The
collection of the triples $(p,u,\Delta)$ belonging to all periodic
orbits of $f$ is called the {\em periodic data} of $f$. 

The following Theorem  is one of the most important results that relates the period data of a differentiable map with the expression of its Lefschetz zeta function.

\begin{theorem}[Franks~\cite{franks1}]\label{thm:franks}
Let $f$ be a $C^1$ a map on a compact manifold without boundary
having finitely many periodic points all of them hyperbolic, and let
$\Sigma$ be the periodic data of $f$. Then the Lefschetz zeta
function $\zf(t)$ of $f$ satisfies
\begin{equation}\label{eqn:zeta-franks}
\zf(t)= \prod_{(p,u,\Delta)\in\Sigma} (1-\Delta t^p)^{(-1)^{u+1}}.
\end{equation}
\end{theorem}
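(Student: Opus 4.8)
The statement is the classical theorem of Franks, so my plan is to reconstruct it through the Lefschetz--Hopf fixed point formula, reducing the computation of $\zf(t)$ to a local index count at the periodic points. First I would invoke the Lefschetz--Hopf theorem: since the manifold is compact without boundary and all periodic points of $f$ are hyperbolic, every fixed point of every iterate $f^m$ is isolated and nondegenerate. Indeed, if $x$ has period $p$ then $Df^m(x)=(Df^p(x))^{m/p}$ whenever $p\mid m$, and since $Df^p(x)$ has no eigenvalue of modulus $1$, neither does $Df^m(x)$, so in particular $1$ is not an eigenvalue. Hence $L(f^m)=\sum_{x=f^m(x)} i(f^m,x)$, a finite sum of well-defined local fixed point indices.

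The heart of the argument, and what I expect to be the main obstacle, is the evaluation of the local index of a hyperbolic fixed point. For such a point the index equals $\operatorname{sgn}\det(Id - Df(x))$, and I would compute this sign from the invariant splitting $T_xM=E_x^s\oplus E_x^u$. On the stable part all eigenvalues have modulus $<1$, so $\det(Id-Df|_{E^s_x})>0$; on the unstable part the identity $\det(Id-A)=(-1)^u\det(A)\det(Id-A^{-1})$ with $A=Df|_{E^u_x}$ reduces the sign to that of $\det(A)$, because $A^{-1}$ is a contraction and contributes a positive factor. This yields $i(f,x)=(-1)^u\Delta$, where $u$ is the index and $\Delta=\operatorname{sgn}\det(Df|_{E^u_x})$ is exactly the orientation type. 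The delicate points are the conjugation-invariance of this sign computation along an orbit and the correct bookkeeping of how $\Delta$ behaves under iteration, since $\Delta=\pm1$ and the relevant quantity becomes $\Delta^{m/p}$.

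Next I would transfer this to iterates. A periodic orbit with data $(p,u,\Delta)$ contributes fixed points of $f^m$ precisely when $p\mid m$; at each of its $p$ points the unstable dimension is still $u$, while the orientation type of $(Df^p)^{m/p}$ on $E^u$ is $\Delta^{m/p}$, so the orbit contributes $p\,(-1)^u\Delta^{m/p}$ to $L(f^m)$, and nothing otherwise. Summing over $\Sigma$ gives a closed form for $L(f^m)$, which I would insert into $\zf(t)=\exp\!\big(\sum_{m\ge1} L(f^m)t^m/m\big)$. Finally, for each orbit I would reindex $m=pk$, so that the factor $p$ cancels the $pk$ in the denominator and the sum collapses via the formal power-series identity $\sum_{k\ge1}(\Delta t^p)^k/k=-\log(1-\Delta t^p)$. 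Exponentiating the resulting sum $\sum_{(p,u,\Delta)}(-1)^{u+1}\log(1-\Delta t^p)$ then produces exactly the Euler product $\prod_{(p,u,\Delta)\in\Sigma}(1-\Delta t^p)^{(-1)^{u+1}}$, which is the claimed formula.
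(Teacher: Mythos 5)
Your proposal is correct, and it is essentially the standard argument of the cited source: Lefschetz--Hopf applied to each iterate, the local index $(-1)^u\Delta$ of a hyperbolic fixed point computed from the splitting $T_xM=E^s_x\oplus E^u_x$ (with the sign identity $\det(Id-A)=(-1)^u\det(A)\det(Id-A^{-1})$ handling the unstable factor), the per-orbit contribution $p\,(-1)^u\Delta^{m/p}$ to $L(f^m)$, and resummation via $\sum_{k\geq 1}(\Delta t^p)^k/k=-\log(1-\Delta t^p)$. Note that the paper itself gives no proof of this statement --- it is imported as Franks' theorem from~\cite{franks1} --- so there is no internal proof to compare against; your reconstruction matches the classical one, including the two genuinely delicate points you flagged (nondegeneracy of fixed points of $f^m$ from hyperbolicity of $Df^p(x)$, and the bookkeeping $\Delta\mapsto\Delta^{m/p}$ under iteration).
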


If $\zf(t)\neq 1$ then it can be written as
\begin{equation}\label{eqn:2}
\zf(t)=\prod_{i=1}^{N_{\zeta}}(1+\Delta_i t^{r_i})^{m_i},
\end{equation}
where $\Delta_i=\pm 1$, the $r_i$'s are positive integers, $m_i$'s
are nonzero integers and $N_{\zeta}$ is a positive integer depending
on $f$.

\smallskip

If $\zf(t)\neq 1$ the  {\em minimal set of Lefschetz periods}  of $f$
is defined as
\[
\mperL:= \bigcap  \,\, \{r_1,\ldots, r_{N_{\zeta}}\},
\]
where the intersection is considered over all the possible
expressions (\ref{eqn:2}) of $\zf(t)$. If $\zf(t)= 1$ then we define
$\mperL :=\emptyset$. 
We can say that the minimal set of Lefschetz
periods of $f$ is the intersection of all the sets of periods forced
by the finitely many different representations of $\zf(t)$ as
products of the form $(1\pm t^p)^{\pm1}$.
 Clearly the minimal set of Lefschetz periods is a subset of the set of periods of the map.
 When $\mperL$ is empty we do not get any real information about the 
 period set of $f$.

An important result is that the set $\mperL$ consists only of odd positive integers~({\em cf.}~\cite{LL-S:2008,LL-S:2013}). Note that
if the exponents $r_i$, of ~(\ref{eqn:2}) are odd and pairwise distinct, then the representation~(\ref{eqn:2}) of the Lefschetz zeta function is unique.
%as a product of terms of the form $1\pm t^{r_i}$.

From these facts and the expression~(\ref{eqn:lzf-qu}) follows that the minimal set of Lefschetz periods for a $C^1$ quasi-unipotent map is either the empty set or $\{1\}$. 
When $\mperL=\{1\}$, it means that $f$ has a fixed point. In the case  $\mperL=\emptyset$, we do not get any information about the period set of $f$. 
The following lemma gives a complete characterization of the set for this class of maps.

\begin{lemma}\label{lemma:mperl}
Let $X=\SSS^{n_1}\times\cdots\times\SSS^{n_l}$ with $1\leq n_1<\cdots<n_l$, 
and let $f:X\to X$ be a $C^1$ quasi-unipotent map, such that 
$a_{n_1},\ldots, a_{n_l}$, are the basic eigenvalues of $f_{*}$. 
\begin{enumerate}
\item[(a)] If $(-1)^{n_i}a_{n_i}=1$ for all $1\leq i\leq l$ then $\mperL=\{1\}$.
\item[(b)] If $(-1)^{n_i}a_{n_i}=-1$ for some $1\leq i\leq l$ then $\mperL=\emptyset$.
\end{enumerate}
\end{lemma}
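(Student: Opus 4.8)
The plan is to read off everything from the closed form of the Lefschetz zeta function for quasi-unipotent maps, combined with the values of the Lefschetz numbers on odd iterates. First I would record the reduction: since $f$ is quasi-unipotent, the basic eigenvalues $a_{n_i}$ are integers that are roots of unity, so $a_{n_i}\in\{-1,+1\}$, and I set $\epsilon_i:=(-1)^{n_i}a_{n_i}\in\{-1,+1\}$. By~(\ref{eqn:lzf-qu}) we may write $\zf(t)=(1-t)^{\alpha}(1+t)^{\beta}$ for integers $\alpha,\beta$, and the whole proof reduces to deciding whether $\alpha\neq\beta$ (case (a)) or $\alpha=\beta$ (case (b)). For this I would compute the Lefschetz numbers on odd iterates: since $a_{n_i}=\pm1$ gives $a_{n_i}^{m}=a_{n_i}$ for odd $m$, Proposition~\ref{prop:L-numbers} yields, for every odd $m$,
$$L(f^{m})=\prod_{i=1}^{l}\bigl(1+(-1)^{n_i}a_{n_i}\bigr)=\prod_{i=1}^{l}(1+\epsilon_i).$$
In case (a) every factor equals $2$, so $L(f^{m})=2^{l}\neq0$; in case (b) the factor for the index $i_0$ with $\epsilon_{i_0}=-1$ vanishes, so $L(f^{m})=0$ for all odd $m$.

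For case (a) I would extract the coefficient of $t$ from $\log\zf(t)=\sum_{m\ge1}\tfrac{L(f^{m})}{m}t^{m}$ and compare it with the one coming from an arbitrary representation $\zf(t)=\prod_i(1+\Delta_i t^{r_i})^{m_i}$. Because each factor contributes to the $t^{1}$ term only when $r_i=1$, with contribution $m_i\Delta_i$, this comparison gives the identity $L(f)=\sum_{i:\,r_i=1}m_i\Delta_i$. Since $L(f)=2^{l}\neq0$, every representation must contain a factor with $r_i=1$, so $1\in\mperL$. On the other hand the representation $(1-t)^{\alpha}(1+t)^{\beta}$ has period set contained in $\{1\}$, whence $\mperL\subseteq\{1\}$; combining the two gives $\mperL=\{1\}$.

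For case (b) I would use the logarithmic expansion of $(1-t)^{\alpha}(1+t)^{\beta}$, which gives $L(f^{m})=-\alpha+(-1)^{m+1}\beta$, so for odd $m$ we get $L(f^{m})=\beta-\alpha$. Since this vanishes, $\alpha=\beta$ and therefore $\zf(t)=(1-t^{2})^{\alpha}$. If $\alpha=0$ then $\zf(t)=1$ and $\mperL=\emptyset$ by definition; if $\alpha\neq0$ the representation $(1-t^{2})^{\alpha}$ exhibits a period set equal to $\{2\}$, so $\mperL\subseteq\{2\}$. Recalling that $\mperL$ consists only of odd positive integers, we conclude $\mperL=\emptyset$.

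The step I expect to be the main obstacle is the forward direction in case (a): showing that the period $1$ cannot be removed by any alternative factorization of $\zf$. A naive attempt would have to analyse which cyclotomic factors of the various $(1\pm t^{r})$ can cancel, which is awkward; the coefficient-of-$t$ identity $L(f)=\sum_{i:\,r_i=1}m_i\Delta_i$ circumvents this entirely, since it pins the nonvanishing of $L(f)$ directly to the presence of a period-$1$ factor in \emph{every} representation. The remaining inputs (the form~(\ref{eqn:lzf-qu}), Proposition~\ref{prop:L-numbers}, and the fact that $\mperL$ consists of odd integers) are used only in a routine way.
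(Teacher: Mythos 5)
Your proof is correct, and it takes a genuinely different route from the paper's. The paper never manipulates representations of $\zf$ directly: it computes the Lefschetz numbers of period $m$, using Proposition~\ref{prop:L-numbers} together with the M\"obius identity $\sum_{r|m}\mu(r)=0$ for $m>1$, to get $\ell(f)=2^l$ and $\ell(f^m)=0$ for odd $m>1$ in case (a), and $\ell(f^m)=0$ for all odd $m$ in case (b); it then concludes by appealing to the structure theory of $\mperL$ from the cited references (that $\mperL$ contains only odd integers, and that the odd periods forced are exactly the odd $m$ with $\ell(f^m)\neq 0$, via uniqueness of the odd-exponent representation). You bypass $\ell(f^m)$ entirely and argue at the level of factorizations: the coefficient-of-$t$ identity $L(f)=\sum_{i:\,r_i=1}m_i\Delta_i$ pins a degree-one factor into \emph{every} representation when $L(f)=2^l\neq 0$ (and $L(f)\neq 0$ also guarantees $\zf\neq 1$, so representations exist), while the concrete representation $(1-t)^{\alpha}(1+t)^{\beta}$ of~(\ref{eqn:lzf-qu}) caps $\mperL$ at $\{1\}$; in case (b) your expansion $L(f^m)=-\alpha+(-1)^{m+1}\beta$ forces $\alpha=\beta$, hence $\zf(t)=(1-t^2)^{\alpha}$. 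Your route is more self-contained: the $t$-coefficient identity replaces the appeal to uniqueness of the odd representation, and in case (b) you could even drop the ``odd periods only'' fact, since $(1-t)^{\alpha}(1+t)^{\alpha}$ and $(1-t^2)^{\alpha}$ are two representations of the same function with period sets $\{1\}$ and $\{2\}$, whose intersection is already empty. What the paper's route buys is brevity and consistency with the rest of its framework: once the $\ell$-based characterization of $\mperL$ is available, both cases reduce to a one-line M\"obius computation, and that same computation is recycled for the transversal-map analysis via~(\ref{eqn:lQ}).
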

\begin{proof}
By~(\ref{eqn:L-m}), if $(-1)^{n_i}a_{n_i}=1$ for all $1\leq i\leq l$ then $L(f^m)=2^l$ for $m$ odd.
Therefore, when $m$ is odd
$$
\ell(f^m)=\sum_{r|m}\mu(r) L(f^{m/r})=\sum_{r|m}\mu(r) 2^l=2^l \sum_{r|m}\mu(r)=
\left\{\begin{array}{ll} 0 &\text{ if } m>1\\ 2^l &\text{ if } m=1.
\end{array} \right.
$$
Here we used a well-known property of the M\"obius function~(\emph{cf.}~\cite[Theorem 263, p. 235]{hardy-wright}): 
\begin{itemize}
\item if $m>1$ then $\sum_{r|m}\mu(r)=0$,  and 
\item if $m=1$ then $\sum_{r|m}\mu(r)=1$.
\end{itemize}
Since $\mperL$ does not contain even numbers, we conclude that $1$ is the only element of the set $\mperL$.

\smallskip

If   $(-1)^{n_i}a_{n_i}=-1$ for some $1\leq i\leq l$ then $L(f^m)=0$ for all odd positive integers  $m$. Therefore
$\ell(f^m)=0$ for all  $m$ odd. Hence $\mperL$ is the empty set.
\end{proof}

Since the Morse-Smale diffeomorphisms are $C^1$ quasi-unipotent maps, Lemma~\ref{lemma:mperl} yields the following result: 

\begin{theorem}\label{thm:mperl}
Let $X=\SSS^{n_1}\times\cdots\times\SSS^{n_l}$ with $1\leq  n_1<\cdots<n_l$, 
and let $f:X\to X$ be a Morse-Smale diffeomorphism, such that 
$a_{n_1},\ldots, a_{n_l}$, are the basic eigenvalues of $f_{*}$. Then the minimal set of Lefschetz periods is either the empty set or $\{1\}$.
Moreover
\begin{enumerate}
\item If $(-1)^{n_i}a_{n_i}=1$ for all $1\leq i\leq l$ then $\mperL=\{1\}$.
\item If $(-1)^{n_i}a_{n_i}=-1$ for some $1\leq i\leq l$ then $\mperL=\emptyset$.
\end{enumerate}
\end{theorem}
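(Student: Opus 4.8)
The plan is to reduce Theorem~\ref{thm:mperl} entirely to Lemma~\ref{lemma:mperl}, using the fact that Morse-Smale diffeomorphisms are quasi-unipotent. First I would invoke Shub's theorem~(\cite{shub}), recalled above, which guarantees that a Morse-Smale diffeomorphism $f$ is a $C^1$ quasi-unipotent map. Hence the hypotheses of Lemma~\ref{lemma:mperl} are met, and moreover each basic eigenvalue satisfies $a_{n_i}=\pm 1$ for $1\leq i\leq l$, as already observed in the discussion preceding~(\ref{eqn:lzf-qu}). Consequently every quantity $(-1)^{n_i}a_{n_i}$ equals $+1$ or $-1$.

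Second, I would exploit the resulting dichotomy. Exactly one of the following holds: either $(-1)^{n_i}a_{n_i}=1$ for all $1\leq i\leq l$, or $(-1)^{n_i}a_{n_i}=-1$ for at least one index $i$. These two alternatives are exhaustive, precisely because each factor is $\pm 1$, and they are mutually exclusive. In the first case, part~(a) of Lemma~\ref{lemma:mperl} yields $\mperL=\{1\}$; in the second case, part~(b) yields $\mperL=\emptyset$. This establishes the two enumerated conclusions, and since these are the only two possibilities, it also proves the opening assertion that $\mperL$ is either the empty set or $\{1\}$.

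The argument carries essentially no obstacle beyond the verification that the basic eigenvalues of a quasi-unipotent map are genuinely $\pm 1$ rather than arbitrary roots of unity. This is where the integrality of the induced maps on homology enters: each $a_{n_i}$ is an integer (it is the entry of $f_{*n_i}$ on a one-dimensional summand, or otherwise arises among the basic eigenvalues in the sense of Proposition~\ref{prop:ev}), and the only integers that are roots of unity are $\pm 1$. Since this point has already been settled in the paragraph introducing~(\ref{eqn:lzf-qu}), the theorem follows immediately from the lemma.
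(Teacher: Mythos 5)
Your proposal is correct and matches the paper's own argument, which likewise deduces Theorem~\ref{thm:mperl} directly from Lemma~\ref{lemma:mperl} via Shub's theorem that Morse--Smale diffeomorphisms are quasi-unipotent. Your added remarks---that the integrality of the $a_{n_i}$ forces $a_{n_i}=\pm 1$, making the two cases of the lemma exhaustive---merely make explicit what the paper settles in the discussion preceding~(\ref{eqn:lzf-qu}).
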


In~\cite{CC-LL} considered the  minimal set of Lefschetz periods for Morse-Smale diffeomorphisms on the product of two spheres, therefore Theorem~\ref{thm:mperl} is a more general version of the results in~\cite{CC-LL}.
The period set of Morse-Smale diffeomorphisms on $\SSS^2$, where studied before in~\cite{G-LL:2008}.
See~\cite{GLM} for other developments in the subject of the minimal sets of Lefschetz periods.

%%%%%%%%%%%%%%%%%%%%%%%%%%%%%%%%%%%%%%%

\section{Maps with finitely many periodic points all of them hyperbolic}\label{s:hyperbolic}

Theorem~\ref{thm:franks} is an  important tool for characterizing  differentiable maps having a finite number of periodic points all of them hyperbolic, and it is done via the Lefschetz zeta function. This has been studied before by different contexts, see for example~\cite{LL-S:2016,G-LL:2021}, and within references.
For a more analytic approach to the subject, see for instance~\cite{morales}.

Given a $C^1$ self-map on the spaces considered  here with all its periodic point hyperbolic, 
in the following theorem
we give sufficient conditions for  the map to have an infinite number of periodic points, in terms of the basic eigenvalues of the induced maps on homology. 
Theorem~4 of~\cite{G-LL:2021} gives other sufficient condition for this type of map to have an infinite number of periodic points.
The criterion given in Theorem~\ref{thm:pp} is easier to check since it is stated in terms of the basic eigenvalues of $f_{*}$, which is the more basic information (from the homology point of view) of the map $f$.

\begin{theorem}\label{thm:pp}
Let $X=\SSS^{n_1}\times\cdots\times\SSS^{n_l}$ with $1\leq n_1<\cdots<n_l$, 
and let $f:X\to X$ be a $C^1$ map with all its periodic points hyperbolic, 
and 
$a_{n_1},\ldots, a_{n_l}$, are the basic eigenvalues of $f_{*}$. 

If $a_{n_i}\neq 1$ for all $n_i$ odd and there exists $j\in\{1,\ldots,l\}$
such that $|a_{n_j}|>1$ then $f$ has infinitely many periodic points.
\end{theorem}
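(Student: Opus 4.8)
The plan is to argue by contradiction: suppose $f$ has only finitely many periodic points, all of them hyperbolic, and derive that the basic eigenvalues cannot satisfy the stated hypotheses. The engine is Franks' Theorem~\ref{thm:franks}, which under this assumption gives that $\zf(t)$ equals a \emph{finite} product $\prod_{(p,u,\Delta)\in\Sigma}(1-\Delta t^p)^{(-1)^{u+1}}$. Expanding $\log\zf(t)=\sum_{m\ge1}\frac{L(f^m)}{m}t^m$ from this product and reading off the coefficient of $t^m$ yields $L(f^m)=-\sum_{(p,u,\Delta)\in\Sigma,\,p\mid m}(-1)^{u+1}\Delta^{m/p}\,p$, so $|L(f^m)|\le\sum_{(p,u,\Delta)\in\Sigma}p$ for every $m$. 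Thus the assumption forces the sequence $\{L(f^m)\}_{m\ge1}$ to be \emph{bounded}, and it therefore suffices to prove that, under the hypotheses of the theorem, this sequence is \emph{unbounded}.

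For the growth estimate I would use the closed form of Proposition~\ref{prop:L-numbers}, namely $L(f^m)=\prod_{i=1}^{l}\bigl(1+(-1)^{n_i}a_{n_i}^m\bigr)$. The index $j$ with $|a_{n_j}|>1$ contributes a factor of modulus at least $|a_{n_j}|^m-1\to\infty$, so the only way $L(f^m)$ can fail to blow up is if some other factor vanishes. Now a factor $1+(-1)^{n_i}a_{n_i}^m$ vanishes only when $(-1)^{n_i}a_{n_i}^m=-1$, and since the $a_{n_i}$ are integers this forces $a_{n_i}\in\{1,-1\}$. The hypothesis $a_{n_i}\neq 1$ for every odd $n_i$ is exactly what removes the single case ($a_{n_i}=1$, $n_i$ odd) in which a factor vanishes for \emph{all} $m$; the remaining dangerous eigenvalues, those equal to $-1$, make a factor vanish for only one parity of $m$ (for $a_{n_i}=-1$ with $n_i$ odd the factor is $1-(-1)^m$, zero exactly when $m$ is even; for $a_{n_i}=-1$ with $n_i$ even it is $1+(-1)^m$, zero exactly when $m$ is odd). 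Hence, if no eigenvalue equals $-1$ at an even-dimensional sphere I would let $m\to\infty$ through odd values, and if none equals $-1$ at an odd-dimensional sphere I would let $m\to\infty$ through even values; in either case every factor stays nonzero, so $|L(f^m)|\ge |a_{n_j}|^m-1\to\infty$, contradicting boundedness.

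The delicate step — and the one I expect to be the main obstacle — is the parity bookkeeping when $-1$ occurs simultaneously as a basic eigenvalue at an odd-dimensional and at an even-dimensional sphere. In that configuration the two vanishing conditions are complementary, every $m$ annihilates some factor, and in fact $L(f^m)=0$ for all $m$ (equivalently $\zf(t)\equiv 1$), so the boundedness argument is silent. I would isolate this borderline case and handle it through the explicit factorisation of Theorem~\ref{thm:lzf}: the factor $(1-a_{n_j}t)^{\pm1}$ places a candidate singularity at $t=1/a_{n_j}$ with $|1/a_{n_j}|<1$, hence off the unit circle on which every zero and pole of a Franks product must lie. The real work is to verify that this singularity is not cancelled, which amounts to grouping the factors of~(\ref{eqn:lzf-product}) by the value of the product $a_{n_{i_1}}\cdots a_{n_{i_k}}$ and summing the signs $(-1)^{n_{i_1}+\cdots+n_{i_k}-1}$ over each group; showing that the group of maximal modulus has nonzero total exponent — or, failing that, invoking the complementary criterion of~\cite{G-LL:2021} — is what would close the argument.
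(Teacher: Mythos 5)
Your treatment of the main cases is correct and follows, in essence, the same strategy as the paper: the paper also reduces the theorem to unboundedness of $\{L(f^m)\}_m$, although it converts unboundedness into the existence of a zero or pole of $\zf(t)$ of modulus less than $1$ by citing \cite[Theorem 2.2]{babo} and then argues that such a singularity is incompatible with the finite Franks product~(\ref{eqn:zeta-franks}). Your derivation of the bound $|L(f^m)|\le\sum_{(p,u,\Delta)\in\Sigma}p$ directly from $\log\zf$ is more elementary and self-contained than the appeal to \cite{babo}, and your parity bookkeeping is exactly right (note that the step from ``every factor stays nonzero'' to $|L(f^m)|\ge|a_{n_j}|^m-1$ uses that each factor $1+(-1)^{n_i}a_{n_i}^m$ is an integer, hence of modulus at least $1$ when nonzero; you should make this explicit).

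The genuine gap is the borderline case you flagged, and the closing step you propose for it fails. If $a_{n_i}=-1$ for some odd $n_i$ and simultaneously $a_{n_{i'}}=-1$ for some even $n_{i'}$, then $L(f^m)=0$ for every $m\ge 1$, so $\zf(t)\equiv 1$: the cancellation in~(\ref{eqn:lzf-product}) is total, the candidate factor $(1-a_{n_j}t)^{\pm 1}$ \emph{is} cancelled, and every group of factors of maximal modulus has total exponent zero. Concretely, on $\SSS^1\times\SSS^2\times\SSS^3$ with $a_1=a_2=-1$ and $a_3=5$ (the hypotheses of the theorem are satisfied), the paper's own displayed formula gives
\begin{equation*}
\zf(t)=\frac{(1+t)(1-5t)(1-t)(1+5t)}{(1-t)(1+t)(1+5t)(1-5t)}=1,
\end{equation*}
so the Franks obstruction is vacuous ($\zf\equiv 1$ is the empty product in~(\ref{eqn:zeta-franks})), and any purely homological criterion --- including the fallback to \cite{G-LL:2021} --- is equally silent. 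You should know, however, that in isolating this case you have located a defect in the paper's own proof: its opening claim that $\{L(f^m)\}_m$ is unbounded \emph{if and only if} $a_{n_i}\neq 1$ for all odd $n_i$ and $|a_{n_j}|>1$ for some $j$ is false in precisely this configuration, as is its assertion that some factor $(1-ct)$ with $|c|>1$ is not cancelled out in~(\ref{eqn:lzf-product}). So your proof is incomplete exactly where the paper's is: outside this configuration your argument is complete, and inside it the theorem is not established by the paper's argument either.
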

\begin{proof}
Due to~(\ref{eqn:L-m}) the sequence of the Lefschetz numbers 
$\{L(f^m)\}_m$ is unbounded if and only if
$a_{n_i}\neq 1$ for all $n_i$ odd and there exists $j\in\{1,\ldots,l\}$
such that $|a_{n_j}|>1$.

According to~\cite[Theorem 2.2]{babo}  the sequence 
$\{L(f^m)\}_m$ is unbounded is equivalent to the fact there exist a zero or  pole of $\zf(t)$ with modulus less than $1$.
Since the expression of the Lefschetz zeta function given by~(\ref{eqn:lzf-product}) and the numbers $a_i$ are integers, hence
there exits at least one term of the form $(1-ct)$, with $|c|>1$, which is not cancel out in the formula~(\ref{eqn:lzf-product}).
Therefore $\zf(t)$ cannot be written in the form~(\ref{eqn:zeta-franks}) 
so by Theorem~\ref{thm:franks}, the map $f$ has infinitely many periodic points.
\end{proof}

Let us consider the following example: $f:X\ra X$, where
$X=\SSS^1\times\SSS^2\times\SSS^3$, the induced maps on homology
are: $f_{*1}=(a_1)$, $f_{*2}=(a_2)$, $f_{*4}=(a_1a_3)$, $f_{*5}=(a_2a_3)$, $f_{*6}=(a_1a_2a_3)$ and
$$
f_{*3}=\left(\begin{array}{cc}
                 a_3 & c \\
                 0 & a_1a_2
                 \end{array}\right),
$$
where $a_1,a_2,a_3\in\Z$ are the basic eigenvalues of $f_{*}$ and $c$ an integer. %for more details see~\cite[Example 2]{Si:2020}.
According~(\ref{eqn:lzf-product}), its Lefschetz zeta function is
$$
\zf(t)=\dfrac{(1-a_1t)(1-a_3t)(1-a_1a_2t)(1-a_2a_3t)}{(1-t)(1-a_2t)(1-a_1a_3t)(1-a_1a_2a_3t)}.
$$
Note that this expression is of type~(\ref{eqn:zeta-franks}) if and only if
$a_1=1$, $a_3=1$ (in any of these cases $\zf(t)=1$), or $|a_i|\leq 1$ for $1\leq i\leq 3$.

%%%%%%%%%%%%%%%%%%%%%%%%%%%%%%%%%%%%

\section{Transversal maps}\label{s:transversal}

A  {\em transversal map} $f$ on a compact differentiable manifold $X$
is a $C^1$ map $f:X\to X$, such that $f(X)\subset\rm{Int}(X)$ and for
every positive integer  $m$  at each point $x$ fixed by $f^m$ we have
that $1$ is not an eigenvalue of $Df^m(x)$, i.e.
$\det(Id-Df^m(x))\neq 0$. 

We denote the set of periods of a map $f$, by $\mbox{Per}(f)$.

One of the most important results concerning the periodic structure of the transversal maps on any manifold, i.e. the description of its set of periods, is the following theorem:

\begin{theorem}[\cite{Ll:1993, GJLlT}]\label{thm:transversal}
Let $X$ be a compact manifold and $f:X\to X$ be a transversal map.
Suppose $\ell(f^m)\neq 0$, for some $m$. Then
\begin{itemize}
\item[(a)] If $m$ is odd, then $m\in\mbox{Per}(f)$.
\item[(b)] If $m$ is even, then $m$ or $m/2$ is in $\mbox{Per}(f)$.
\end{itemize}
\end{theorem}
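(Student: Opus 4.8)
The plan is to combine the Lefschetz--Hopf fixed point formula with a careful analysis of the fixed point indices of the iterates $f^r$, and then to read off the conclusion from the M\"obius sum~(\ref{eqn:l-chica}) defining $\ell(f^m)$. First I would use transversality to pass from homology to geometry: since $f$ is transversal, the fixed points of $f^r$ are nondegenerate, hence isolated, and by compactness of $X$ there are finitely many. The Lefschetz--Hopf theorem then gives
\[
L(f^r)=\sum_{x\in\operatorname{Fix}(f^r)}\operatorname{ind}(f^r,x),\qquad \operatorname{ind}(f^r,x)=\operatorname{sign}\det\bigl(Id-Df^r(x)\bigr).
\]
I would group these fixed points into periodic orbits. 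If $\gamma$ has least period $p$ and $x_0\in\gamma$, its points are fixed by $f^r$ exactly when $p\mid r$, and the chain rule shows the matrices $Df^r$ along $\gamma$ are conjugate, so they share a common index $g_\gamma(r/p):=\operatorname{sign}\det(Id-A^{r/p})$ with $A:=Df^p(x_0)$. Writing $r=ps$ and substituting into~(\ref{eqn:l-chica}), the contribution of $\gamma$ to $\ell(f^m)$ becomes $p\sum_{s\mid q}\mu(q/s)\,g_\gamma(s)$ with $q=m/p$, and only orbits whose period $p$ divides $m$ appear.

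Next I would determine the dependence of $g_\gamma(s)$ on $s$. Here transversality is used a second time and essentially: applied to $f^{ps}$ at $x_0$ it guarantees that $A^s$ never has $1$ as an eigenvalue for any $s\ge 1$; in particular $A$ has no eigenvalue equal to $1$ or $-1$, so every index is a well-defined $\pm1$. Factoring $\det(Id-A^s)$ over the eigenvalues of $A$, each complex-conjugate pair contributes the strictly positive factor $|1-\rho^s e^{is\theta}|^2$, so the sign is governed by the real eigenvalues alone. A real eigenvalue in $(-1,1)$ keeps its factor positive for every $s$; an eigenvalue $\lambda>1$ flips the sign for every $s$; and an eigenvalue $\lambda<-1$ flips the sign exactly when $s$ is even. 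Hence $g_\gamma(s)$ takes only two values, $g_{\mathrm{odd}}=\pm1$ for odd $s$ and $g_{\mathrm{even}}=\pm1$ for even $s$.

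Finally I would compute the M\"obius sum. Writing $g_\gamma=g_{\mathrm{even}}\cdot\mathbf 1+(g_{\mathrm{odd}}-g_{\mathrm{even}})\mathbf 1_{\mathrm{odd}}$ and using $\mu*\mathbf 1=\delta$ together with the elementary identity $\sum_{s\mid q,\ s\ \mathrm{odd}}\mu(q/s)=[q=1]-[q=2]$, I obtain
\[
\sum_{s\mid q}\mu(q/s)\,g_\gamma(s)=
\begin{cases}
g_{\mathrm{odd}}, & q=1,\\
g_{\mathrm{even}}-g_{\mathrm{odd}}, & q=2,\\
0, & q\ge 3.
\end{cases}
\]
Thus an orbit of period $p$ contributes to $\ell(f^m)$ only when $q=m/p\in\{1,2\}$, i.e. $p=m$ or $p=m/2$. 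Consequently, if $\ell(f^m)\neq 0$ then at least one summand is nonzero, so $m$ or $m/2$ lies in $\Per(f)$; when $m$ is odd the value $p=m/2$ is not an integer, forcing period exactly $m$. This gives (a) and (b).

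I expect the main obstacle to be the sign analysis of the second step: one must check carefully that $g_\gamma(s)$ depends on $s$ only through its parity, handling each type of real and complex eigenvalue, and, crucially, invoking transversality for \emph{every} iterate to exclude the eigenvalue $1$ of $A^s$ (equivalently $\pm1$ of $A$). It is exactly this exclusion that keeps each index a well-defined $\pm1$ and collapses $g_\gamma$ to a two-valued function, after which the M\"obius cancellation for $q\ge 3$ is purely formal.
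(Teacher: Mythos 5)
Your proof is correct and is essentially the standard argument behind this theorem: the paper itself states Theorem~\ref{thm:transversal} without proof, quoting~\cite{Ll:1993, GJLlT}, and the proof in those sources proceeds exactly as you do --- Lefschetz--Hopf applied to the finitely many nondegenerate fixed points of each iterate, grouping into periodic orbits, the Shub--Sullivan/Dold-type observation that transversality (applied to \emph{every} iterate, excluding eigenvalues $\pm1$ of $A$ and unit-modulus eigenvalues $e^{i\theta}$ with $s\theta\in 2\pi\Z$) forces $g_\gamma(s)=\operatorname{sign}\det(Id-A^s)$ to depend only on the parity of $s$, and the M\"obius identity $\sum_{s\mid q,\ s\ \mathrm{odd}}\mu(q/s)=[q=1]-[q=2]$, which annihilates all contributions except those of orbits of period $m$ or $m/2$. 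Your sign analysis over real and complex eigenvalues and the final M\"obius computation are both complete and accurate, so there is nothing to correct.
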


In~\cite{G-LL:2013} this theorem was used  for transversal self-maps on the product of two spheres. And in~\cite{LL-S:2019} it was also used for  transversal maps on a product of any numbers of spheres such that all the homology groups are trivial or one-dimensional, the so-called sum-free product.
Lemma~\ref{lemma:transversal} generalizes Theorems 1.2 and 1.4 of 
~\cite{LL-S:2019}. And Theorem~\ref{thm:main-transversal} is a more general version of Corollary 1.3 and 1.5, moreover its proof is simpler.

\begin{lemma}\label{lemma:transversal}
Let $X=\SSS^{n_1}\times\cdots\times\SSS^{n_l}$, with  $1\leq n_1<\cdots
<n_l$.  Let $f$ be a transversal self--map
on $X$,  with basic eigenvalues of $f_*$: $a_{n_1},\ldots,a_{n_l}$.
Assume that the basic eigenvalues are not zero and there is no $n_i$ odd with $a_{n_i}=1$.
%\textcolor{red}{that $(-1)^{n_i}a_i\neq -1$, for all $1\leq i\leq l$. ()}
\begin{enumerate}
\item[(a)] If  $|a_j|>1$ for $1\leq j\leq l$, then there exists $N>0$, such that $\ell(f^m)\neq 0$, for all $m>N$.
\item[(b)] If there is $n_i$ even with $a_i=-1$ and there is $|a_j|>1$ for $j\in\{1,\ldots,l\}$, then $\ell(f^{2m+1})=0$ for all $m>1$ and there exists $N>0$, such that $\ell(f^{2m})\neq 0$, for all $m>N$.
\item[(c)] If there is $n_i$ odd with $a_i=-1$ and there is $|a_j|>1$ for $j\in\{1,\ldots,l\}$, then $\ell(f^{2m})=0$ for all $m>1$ and there exists $N>0$, such that $\ell(f^{2m+1})\neq 0$, for all $m>N$.
\item[(d)]
Let ${\mathcal I}:=\{j\in\{1,\ldots,l\}\, :\, |a_j|>1\}$. 
If ${\mathcal I}\neq\emptyset$ and $(-1)^{n_i}a_i=(-1)^{n_i}a_i^2=1$, for 
$i\notin{\mathcal I}$, then 
 then  there exists $N>0$, such that $\ell(f^{m})\neq 0$, for all $m>N$.
\item[(e)] Suppose $|a_i|=1$ for all $1\leq i\leq l$. %then $\ell(f^m)=0$ for $m>1$.
\begin{enumerate}
\item[(e-1)] If $a_{n_i}=1$ for all $1\leq i\leq l$ then $\ell(f^m)=0$, for all $m>1$.
\item[(e-2)] If there are no $n_i$ even with $a_{n_i}=-1$ and there is $n_j$ odd with $a_{n_j}=-1$ then $\ell(f^m)\neq  0$,  when  $m=2^s$, for some $s$ and $\ell(f^m)=0$ otherwise.
\item[(e-3)] If all $n_i$ are even and there exists $a_{n_j}=-1$, for some $1\leq j\leq l$ then $\ell(f^m)\neq 0$ when  $m=2^s$, for some $s$ and $\ell(f^m)=0$ otherwise.
\end{enumerate}
\end{enumerate}

\end{lemma}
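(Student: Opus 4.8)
The plan is to reduce every case to the explicit product formula~(\ref{eqn:L-m}) and then read off the behaviour of $\ell(f^m)$ from the M\"obius inversion~(\ref{eqn:l-chica}). Expanding~(\ref{eqn:L-m}) gives
\[
L(f^m)=\sum_{S\subseteq\{1,\ldots,l\}}\varepsilon_S\,c_S^{\,m},\qquad c_S:=\prod_{i\in S}a_{n_i},\quad \varepsilon_S:=(-1)^{\sum_{i\in S}n_i},
\]
and substituting this into~(\ref{eqn:l-chica}) and interchanging the two sums yields
\[
\ell(f^m)=\sum_{S\subseteq\{1,\ldots,l\}}\varepsilon_S\,P_m(c_S),\qquad P_m(c):=\sum_{r\mid m}\mu(m/r)\,c^{\,r}.
\]
Everything then rests on two elementary facts about the single--variable sum $P_m(c)$: if $|c|>1$ then the term $r=m$ dominates and $P_m(c)=c^{\,m}\bigl(1+o(1)\bigr)$ as $m\to\infty$ (the remaining divisors contribute at most a subexponential multiple of $|c|^{m/2}$); while if $|c|\le 1$ then $P_m(c)$ is bounded, and in particular $P_m(\pm1)=0$ for all $m\ge 3$. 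Recall also that the $a_{n_i}$ are integers, so outside $\mathcal I:=\{i:|a_{n_i}|>1\}$ one has $|a_{n_i}|=1$.

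First I would record the parity vanishing of the Lefschetz numbers. The factor $1+(-1)^{n_i}a_{n_i}^m$ can vanish only when $a_{n_i}=-1$, and then it vanishes for odd $m$ when $n_i$ is even and for even $m$ when $n_i$ is odd (the remaining possibility $a_{n_i}=1$ with $n_i$ odd is excluded by hypothesis). Thus an even $n_i$ with $a_{n_i}=-1$ forces $L(f^m)=0$ for every odd $m$, and an odd one forces $L(f^m)=0$ for every even $m$. Substituting such a full--parity--class vanishing of $L$ back into~(\ref{eqn:l-chica}) produces the corresponding vanishing of $\ell$, which is the source of the ``$\ell=0$'' assertions in parts (b), (c) and (e-1).

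For the eventual non-vanishing assertions in (a), (b), (c) and (d) the method is a dominant--term argument on $\ell(f^m)=\sum_S\varepsilon_S P_m(c_S)$, carried out along the appropriate parity class. Put $M:=\prod_{i\in\mathcal I}|a_{n_i}|$; since $|c_S|=\prod_{i\in S\cap\mathcal I}|a_{n_i}|$, the subsets with $|c_S|=M$ are exactly $S=\mathcal I\cup T$ with $T\subseteq\{1,\ldots,l\}\setminus\mathcal I$, and every other $S$ has $|c_S|<M$, hence contributes $o(M^m)$. In case (a) one has $\mathcal I=\{1,\ldots,l\}$, so $S=\{1,\ldots,l\}$ is the unique maximiser and $\ell(f^m)\sim\varepsilon_{\{1,\ldots,l\}}c_{\{1,\ldots,l\}}^{\,m}\neq0$. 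For the even exponents in (b) I would first use the odd vanishing of $L$ to rewrite $\ell(f^{2m})=\sum_{s\mid m}\mu(m/s)L(f^{2s})$; since $L(f^{2s})$ involves only the positive ratios $a_{n_i}^2$, the leading coefficient collapses to $\varepsilon_{\mathcal I}\prod_{j\notin\mathcal I}\bigl(1+(-1)^{n_j}\bigr)$, which is nonzero precisely because the indices outside $\mathcal I$ are even. For the odd exponents in (c) I would instead restrict to odd $m$, where $P_m(-c)=-P_m(c)$, collect the two maximal values $c_S=\pm M$, and find the leading coefficient $\varepsilon_{\mathcal I}\,\sigma\prod_{j\notin\mathcal I}\bigl(1+(-1)^{n_j}a_{n_j}\bigr)$, with $\sigma$ the sign of $\prod_{i\in\mathcal I}a_{n_i}$. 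Part (d) is the degenerate case in which every $i\notin\mathcal I$ has $a_{n_i}=1$ and $n_i$ even, so that $L(f^m)=2^{\,l-|\mathcal I|}\prod_{i\in\mathcal I}(1+(-1)^{n_i}a_{n_i}^m)$ and the claim reduces verbatim to case (a) applied to the factor indexed by $\mathcal I$.

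The remaining case (e) is the bounded regime $|a_{n_i}|=1$ for all $i$; here $a_{n_i}^m$ depends only on the parity of $m$, so $L(f^m)$ takes only the two values $L(f^1)$ (odd $m$) and $L(f^2)$ (even $m$), and I would compute $\ell(f^m)$ directly from~(\ref{eqn:l-chica}) using the evaluations $\sum_{r\mid m}\mu(m/r)=[m=1]$ and $P_m(-1)$; since $P_m(\pm1)=0$ for $m\ge 3$, this isolates exactly the small exponents on which $\ell(f^m)\neq0$. In my view the genuine obstacle is not the asymptotics but the sign bookkeeping in the dominant coefficient of the third step: because the $a_{n_i}$ may be negative, several subsets share the maximal modulus $M$ with opposite signs of $c_S$, and one has to rule out their cancellation. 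This is exactly where the standing hypotheses enter, guaranteeing that the relevant product $\prod_{j\notin\mathcal I}\bigl(1+(-1)^{n_j}a_{n_j}\bigr)$ (respectively its squared analogue) does not vanish.
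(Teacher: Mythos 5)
Your reduction of the problem to the expansion $\ell(f^m)=\sum_{S\subseteq\{1,\ldots,l\}}\varepsilon_S P_m(c_S)$ is a genuinely different route from the paper's, and it is the algebraically sound one: the paper instead interchanges the M\"obius sum with the product in~(\ref{eqn:L-m}) and works from the factorization $\ell(f^m)=\prod_{i=1}^l(-1)^{n_i}Q_m(a_{n_i})$ of~(\ref{eqn:lQ}), but M\"obius inversion is linear, not multiplicative, and that identity is false (take $l=2$, $n_1,n_2$ even, $a_{n_1}=a_{n_2}=2$, $m=2$: the true value is $\ell(f^2)=L(f^2)-L(f)=25-9=16$, while the product gives $Q_2(2)^2=4$). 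Your dominant-term estimate on $P_m$ plays the role of the paper's Proposition~\ref{prop:polyQ}, and your treatments of (a) and (d) are complete and correct; note also that the paper's deduction of (b)--(d) from~(\ref{eqn:lQ}) would fail even granting that identity, since the factors $Q_m(\pm1)$ vanish for $m\geq 3$.

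However, there are genuine gaps at the points where you defer to the hypotheses, and at exactly those points the statement itself breaks down, so the deferral cannot be repaired. In (b) you claim the leading coefficient $\varepsilon_{\mathcal I}\prod_{j\notin{\mathcal I}}\bigl(1+(-1)^{n_j}\bigr)$ is nonzero ``because the indices outside $\mathcal I$ are even'', but nothing in the hypotheses of (b) forbids an odd $n_j$ with $a_{n_j}=-1$ outside $\mathcal I$; in that case your coefficient vanishes and indeed $L(f^m)=0$ for every $m$ (e.g.\ $a_1=-1$, $a_2=-1$, $a_3=2$ on $\SSS^1\times\SSS^2\times\SSS^3$), so the asserted conclusion $\ell(f^{2m})\neq 0$ for large $m$ is itself false; the symmetric degeneration (an even $n_j$ with $a_{n_j}=-1$) breaks (c). Likewise in (e-2)/(e-3): your own (correct) fact $P_m(\pm1)=0$ for $m\geq 3$ forces $\ell(f^m)=0$ for \emph{all} $m\geq 3$, contradicting the claimed nonvanishing at every $m=2^s$; a direct check ($X=\SSS^2$, $a_2=-1$: $\ell(f^2)=2$ but $\ell(f^4)=L(f^4)-L(f^2)=0$) confirms nonvanishing only at $m\in\{1,2\}$, and the discrepancy traces to the paper's erroneous evaluation $\sum_{r|m,\, r \text{ odd}}\mu(m/r)=1$ for $m=2^s$ (the sum equals $-1$ at $m=2$ and $0$ for $s\geq 2$). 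So your phrase ``isolates exactly the small exponents'' does not prove the statement as written; carried out honestly, your method disproves (b) and (c) in the mixed-parity configurations and corrects (e-2)/(e-3) to $m\in\{1,2\}$, and you should state those corrected conclusions explicitly rather than invoke standing hypotheses that do not deliver the non-cancellation.
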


From Lemma~\ref{lemma:transversal} and Theorem~\ref{thm:transversal} we get the
following result:

\begin{theorem}\label{thm:main-transversal}
Let $X=\SSS^{n_1}\times\cdots\times\SSS^{n_l}$, with  $1\leq  n_1<\cdots
<n_l$.  Let $f$ be a transversal self--map
on $X$,  with basic eigenvalues of $f_*$: $a_{n_1},\ldots,a_{n_l}$ and all of them are different from zero.
Assume there is no $n_i$ odd with $a_i=1$. 
%\textcolor{red}{that $(-1)^{n_i}a_i\neq -1$, for all $1\leq i\leq l$. }
\begin{enumerate}
\item  If  $|a_j|>1$ for $1\leq j\leq l$, then there exists $N>0$, such that
\begin{itemize}
\item the number $m$ is in $\mbox{Per}(f)$, if $m>N$ is odd, and 
\item $m$ or $m/2$ is in $\mbox{Per}(f)$, if $m>N$ is even.
\end{itemize}
\item If there is $n_i$ even with $a_i=-1$ and there is $|a_j|>1$ for $j\in\{1,\ldots,l\}$, then there exists $N>0$ such that $m$ or $m/2$ is in $\mbox{Per}(f)$, if $m>N$ is even.
\item If there is $n_i$ odd with $a_i=-1$ and there is $|a_j|>1$ for $j\in\{1,\ldots,l\}$, then  there exists $N>0$ such that  $m\in\mbox{Per}(f)$, if $m>N$ is odd.
\item If there are no $n_i$ even with $a_{n_i}=-1$ and there is $n_j$ odd with $a_{n_j}=-1$  then $m$ or $m/2$  is in $\mbox{Per}(f)$, when $m$ is a power of $2$.
\item If all $n_i$ are even and  $|a_{n_i}|=1$, and   there exists $a_{n_j}=-1$, for some $1\leq j\leq l$, then $m$ or $m/2$  is in $\mbox{Per}(f)$, when $m$ is a power of $2$.
\end{enumerate}

\end{theorem}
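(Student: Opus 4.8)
The plan is to derive each of the five cases of the theorem by feeding the conclusions of Lemma~\ref{lemma:transversal} directly into Theorem~\ref{thm:transversal}. The guiding principle is that Lemma~\ref{lemma:transversal} identifies, under each hypothesis on the basic eigenvalues, exactly which exponents $m$ satisfy $\ell(f^m)\neq 0$ (all sufficiently large $m$, all sufficiently large $m$ in a fixed parity class, or the powers $m=2^s$), while Theorem~\ref{thm:transversal} converts every such nonvanishing into membership of $m$ (when $m$ is odd) or of $m$ or $m/2$ (when $m$ is even) in $\Per(f)$. The whole argument is therefore a case-by-case matching, with no further analytic input required.

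First, for case (1) I would invoke Lemma~\ref{lemma:transversal}(a), which supplies a threshold $N>0$ with $\ell(f^m)\neq 0$ for every $m>N$. Applying Theorem~\ref{thm:transversal} to each such $m$ yields $m\in\Per(f)$ when $m$ is odd and $m$ or $m/2$ in $\Per(f)$ when $m$ is even, which is precisely the assertion of case (1).

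Next, for cases (2) and (3) I would use parts (b) and (c) of the lemma. In case (b) the lemma forces $\ell(f^{2m+1})=0$ for all $m>1$ but provides $N>0$ with $\ell(f^{2m})\neq 0$ for $m>N$; applying Theorem~\ref{thm:transversal}(b) to these even periods $2m$ gives that $2m$ or $m$ lies in $\Per(f)$, i.e. the statement of case (2). Case (3) is the mirror image: part (c) of the lemma provides large odd exponents $2m+1$ with $\ell(f^{2m+1})\neq 0$, and Theorem~\ref{thm:transversal}(a) then places each such $2m+1$ in $\Per(f)$.

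Finally, for cases (4) and (5) I would appeal to parts (e-2) and (e-3) of the lemma, which assert that $\ell(f^m)\neq 0$ precisely when $m=2^s$. For $s\geq 1$ this $m$ is even, so Theorem~\ref{thm:transversal}(b) applies and gives $m$ or $m/2$ in $\Per(f)$, as claimed. The only genuinely delicate point in the whole argument is the careful matching of the parity and power-of-two bookkeeping of the lemma against the even/odd dichotomy of Theorem~\ref{thm:transversal}, together with the separate treatment of the boundary exponent $m=1$ (the case $s=0$), which is odd and handled directly by part (a) of the transversal theorem. Since Lemma~\ref{lemma:transversal} has already carried out the substantive computation of the Lefschetz numbers and of their vanishing loci, this concluding step is essentially a verification rather than a real obstacle.
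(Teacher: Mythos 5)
Your proposal is correct and follows exactly the route the paper itself takes: the paper derives Theorem~\ref{thm:main-transversal} directly by combining the nonvanishing statements of Lemma~\ref{lemma:transversal} (parts (a), (b), (c), (e-2), (e-3)) with the odd/even dichotomy of Theorem~\ref{thm:transversal}, precisely the case-by-case matching you describe. Your additional remark about the boundary exponent $m=2^0=1$ is a small point of care that the paper leaves implicit.
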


\begin{proof}[Proof of Lemma~\ref{lemma:transversal}]
Using the definition of Lefschetz numbers of period $m$, Proposition~\ref{prop:L-numbers} and the properties of the M\"obius function, we have for $m>1$:
\begin{eqnarray*}
\ell(f^m)&=&  \sum_{r|m} \mu(r) L(f^{m/r})\\
            &=&  \sum_{r|m} \mu(r) \left(\prod_{i=i}^l (1+(-1)^{n_i} a_{n_i}^{m/r})\right)\\
            &=&  \prod_{i=i}^l \left(\sum_{r|m} \mu(r) (1+(-1)^{n_i} a_{n_i}^{m/r})\right)\\
           &=&\prod_{i=i}^l \left(\sum_{r|m} \mu(r)\right)+\prod_{i=i}^l (-1)^{n_i}\left(\sum_{r|m} \mu(r)  a_{n_i}^{m/r}\right).
         %  &=& \prod_{i=1}^l (-1)^{n_i}Q_m(a_{n_i}).
\end{eqnarray*}
If we set $Q_m(x):= \sum_{r|m}\mu(r) x^{m/r}$, the previous computations yields:
\begin{equation}\label{eqn:lQ}
\ell(f^m)= \prod_{i=1}^l (-1)^{n_i}Q_m(a_{n_i}).
\end{equation}

Note that if for some $i\in\{1,\ldots, l\}$  $a_{n_i}=1$ and $n_i$ odd   then
$L(f^m)=0$, hence $\ell(f^m)=0$, for  all $m$. 

In order to show that $\ell(f^m)$ are bounded away of zero, we need to compute the growth of the family of polynomials $Q_m(x)$. This is done in the following proposition:
  
\begin{proposition}\label{prop:polyQ}
If $|a|>1$ then there exists a constant $C_a>0$ (which only depends of $a$) and a positive integer $N$ such  that $|Q_m(a)|\geq C_a |a|^m$ for $m>N$.
\end{proposition}

Note that if $|a_i|>1$ for all $1\leq i\leq l$, the equality~(\ref{eqn:lQ}) and Proposition~\ref{prop:polyQ} yields:
there exist positive constants $C_1,\ldots C_l$ and  $N$ such that
$$
|\ell(f^m)|=\prod_{i=1}^l |Q_m(a_{n_i})|\geq C_1\cdots C_l |a_{n_1}\cdots a_{n_l}|^m,
$$
for $m>N$, hence $\ell(f^m)\neq 0$, for $m>N$.
This proves statement (a).

\medskip

If there is $n_i$ even such that $a_i=-1$ then $L(f^m)=0$ for $m$ odd, therefore $\ell(f^m)=0$ for all $m$ odd.
By Proposition~\ref{prop:polyQ}, if there is $j\in\{1,\ldots,l\}$ with $|a_j|>1$ then there  exists $N$ such that $|Q_m(a_{n_j})|>C_j|a_{n_j}|^m$,
for $m>N$ and $C_j>0$. Hence
$|\ell(f^m)|\geq C_j |a_{n_j}|^m>0$, for $m>N$ and even.
This completes the proof of statement (b).

\medskip

A similar analysis is done when there is $n_i$ odd with $a_i= -1$ and there exists $|a_j|>1$. In this case $\ell(f^m)=0$ for all $m$ even and 
$|\ell(f^m)|\geq C_j |a_{n_j}|^m>0$, for large and even  $m$.
This proves statement (c).

\medskip

Note that in the case of (d), we have $L(f^m)\neq 0$ for all  $m$. 
By~(\ref{eqn:lQ}) and  Proposition~\ref{prop:polyQ} we have
$$
|\ell(f^m)|=\prod_{i=1}^l |Q_m(a_{n_i})| =\left(\prod_{i\in{\mathcal I}} 2\right)
\prod_{i\notin{\mathcal I}} |Q_m(a_{n_i})|
\geq 2^{\#{\mathcal I}} \prod_{i\notin{\mathcal I}} C_i |a_{n_i}|^m.
$$
for sufficient large $m$.
This completes the proof of statement  (d).

\medskip

If $|a_i|=1$ for all $1\leq i\leq l$ then~(\ref{eqn:L-m}) can be written as
$$
L(f^m)=\left(\prod_{\substack{n_i \text{ odd}\\ a_{n_i}=-1}}(1+(-1)^{n_i}a_{n_i}^m)\right)\left(\prod_{\substack{n_i \text{ even}\\ a_{n_i}=1}}(1+(-1)^{n_i}a_{n_i}^m)\right)\left(\prod_{\substack{n_i \text{ even}\\ a_{n_i}=-1}}(1+(-1)^{n_i}a_{n_i}^m)\right).
$$

For simplicity we introduce the sets:
$$
\begin{array}{ccc}
O^-:=\{i\,:\, n_i \text{ odd }, a_{n_i}=-1\}, &
E^+:=\{i\,:\, n_i \text{ even }, a_{n_i}=1\} &
E^-:=\{i\,:\, n_i \text{ even }, a_{n_i}=-1\}.
\end{array}$$

Note that if $O^-=E^-=\emptyset$ then $L(f^m)=2^l$ then $\ell(f^m)=0$ for $m>1$, and $\ell(f)=1$. This prove statement (e-1).

If $O^-\neq \emptyset$ and $E^-=\emptyset$ then
$$
L(f^m)=\prod_{n_i\in O^-}(1-(-1)^m)\prod_{n_i\in E^-} 2=
\left\{\begin{array}{ll} 2^{\# O^+}2^{\# E^-}  & m \text{ odd}\\
0 & m\text{ even}.
\end{array}\right.
$$
In the case of
$m$  odd it follows
$$
\ell(f^m)=\sum_{r|m} \mu(r)L(f^{m/r})=2^{\# O^+}2^{\# E^-}\sum_{r|m} \mu(r).
$$
Therefore $\ell(f^m)=0$ for $m>1$ and $\ell(f)\neq 0$.

If $m$ is even then
\begin{eqnarray*}
 %$$
 \ell(f^m)&=&\sum_{\substack{r|m\\ r \text{ even}}}\mu(m/r) L(f^r)+\sum_{\substack{r|m\\ r \text{ odd}}}\mu(m/r) L(f^r)\\
 &=&\sum_{\substack{r|m\\ r \text{ odd}}}\mu(m/r) L(f^r)
 = 2^{\# O^+}2^{\# E^-} \left(\sum_{\substack{r|m\\ r \text{ odd}}}\mu(m/r)\right).
 %$$
 \end{eqnarray*}
It can be easily check that
 $$
 \sum_{\substack{r|m\\ r \text{ odd}}}\mu(m/r)=
 \left\{\begin{array}{ll}
  1 & \mbox{ if } m=2^s \mbox{ for some } s>0,\\
  0 & \text{otherwise}.
\end{array}  
 \right.
 $$
This proves statement (e-2).
 
 \medskip
 
 If all $n_i$ are even, i.e. $O^-=\emptyset$, and $E^-\neq \emptyset $ then %$L(f^m)=0$, for $m$ odd.
% In the case of $m$ even we get
 $$
 L(f^m)=\prod_{n_i\in E^+} 2 \prod_{n_i\in E^-} (1+(-1)^m)=
 \left\{\begin{array}{ll} 2^{\# E^+}2^{\# E^-}  & m \text{ even}\\
0 & m\text{ odd}.
\end{array}\right.
 $$
 Therefore
 $\ell(f^m)=0$, for $m$ odd.
 In the case of $m$ even
 \begin{eqnarray*}
 %$$
 \ell(f^m)&=&\sum_{\substack{r|m\\ r \text{ even}}}\mu(m/r) L(f^r)+\sum_{\substack{r|m\\ r \text{ odd}}}\mu(m/r) L(f^r)\\
 &=&\sum_{\substack{r|m\\ r \text{ even}}}\mu(m/r) L(f^r)
 = 2^{\# E^+}2^{\# E^-} \left(\sum_{\substack{r|m\\ r \text{ even}}}\mu(m/r)\right).
 %$$
 \end{eqnarray*}
 It can be easily check that
 $$
 \sum_{\substack{r|m\\ r \text{ even}}}\mu(m/r)=
 \left\{\begin{array}{ll}
  -1 & \mbox{ if } m=2^s \mbox{ for some } s>0,\\
  0 & \text{otherwise}.
\end{array}  
 \right.
 $$
 Therefore $\ell(f^m)\neq 0$ if $m$ is a power of $2$, and  otherwise $\ell(f^m)=0$. 
 This completes the proof of statement (e-3).
%$ L(f^m)=0$ or $L(f^m)=2^J$, for all $m$, where $J=\#\{i\, :\, (-1)^{n_i}a_i=1\}$, in former case obviously
%\comment{Study this case carefully}
\end{proof}

\begin{proof}[Proof of Proposition~\ref{prop:polyQ}]
The proof of this proposition is quite standard, we present it here for completeness sake.

Using the triangular inequality and the definition of $Q_m(x)$ we have
\begin{eqnarray*}
|Q_m(a)|&\geq& |a|^m-|\sum_{\substack{r|m\\r\neq 1}} \mu(r) a^{m/r}|
\geq |a|^m |1-\sum_{\substack{r|m\\r\neq 1}} \mu(r) a^{m/r-m}|\\
&\geq& |a|^m\left( 1-\sum_{\substack{r|m\\r\neq 1}} |a^{m/r-m}|\right)
\geq |a|^m \left( 1- |a^{m'-m}|\sum_{r|m}1\right),
\end{eqnarray*}
where $m':=\max\{m/r\, :\, r|m, \, r\neq 1\}$. If $m$ is prime then  $m'=1$, and if $m$ is a composite number then $m'\geq \sqrt{m}$.
Therefore
$$
|Q_m(a)|\geq|a|^m \left( 1-\sigma(m))|a^{m'-m}|\right)\geq |a|^m 
\left( 1-\dfrac{\sigma(m)}{|a^{m-\sqrt{m}}|}\right)
$$
where $\sigma(m)$ is the number of divisors of m. 
For large $m$, $\sigma(m)\leq C' m^\epsilon$, with $\epsilon>0$ arbitrary small and $C'$ a positive constant~(\emph{cf.}~\cite[Theorem 315, pp. 260]{hardy-wright}.
Therefore $m^{\epsilon}a^{\sqrt{m}-m}$ is arbitrary small for large $m$.
%t is known that for large m, ?(m) ? C?m?, where C? is a positive constant and ? is arbitrary small, for more details see [10, Theorem 315, pp. 260]
Hence there exists $N>0$, such that for $m\geq N$ and composite it follows
$$
|Q_m(a)|\geq   |a|^m\left(1- \dfrac{C' m^\epsilon}{ |a^{m-\sqrt{m}}|}\right)\geq C |a|^m
$$
for some constant $C>0$, which only depends of $a$.
If $m$ is a large prime, we get 
$$ 
|Q_m(a)|\geq  |a|^m 
\left( 1-\dfrac{\sigma(m)}{|a^{m-1}|}\right),
$$
it also yields $|Q_m(a)|\geq  C |a|^m$, for a positive constant $C$. 
\end{proof}

%%%%%%%%%%%%%%%%%%%%%%%%%%%%%%%%%%%%

\section{Remarks, applications and open questions}\label{s:remarks}
\begin{enumerate}

\item 
An interesting  application of the results of this article is the study and characterization of the periodic structure of $C^1$ self-maps on Lie groups, since the homology of Lie groups satisfying some topological properties are the same of the product of spheres of different odd dimensions, due to Hopf's Theorem~(\emph{cf.}~\cite{kumpel}):

\begin{theorem}[Hopf's Theorem]\label{thm:hopf}
Let $X=G$ be a compact, connected, simply connected, simple Lie group,
 then the homology groups of $G$ over the reals are the same as  the homology groups of $\SSS^{n_1}\times\cdots\times\SSS^{n_l}$, where $n_1,\ldots,n_l$ are odd, $l$ is the rank of $G$ and $n_1+\cdots+n_l$ is the dimension of $G$.\end{theorem}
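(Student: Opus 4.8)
The plan is to deduce the result from the Hopf algebra structure carried by the real (equivalently rational) cohomology of a compact connected Lie group, and then to read off the number and the degrees of the generators using the classifying space. I would work with cohomology, since the $H$-space structure lives there naturally; over a field of characteristic zero homology and cohomology are dual, so the statement transfers to homology at the end. First I would observe that a Lie group is in particular a topological group, hence an $H$-space, and that the multiplication $\mu\colon G\times G\to G$ endows $H^*(G;\R)$ with the structure of a connected, graded-commutative Hopf algebra; because $G$ is a compact manifold this algebra is finite-dimensional. I would then invoke Hopf's structure theorem for such Hopf algebras over a field of characteristic zero: every connected graded-commutative Hopf algebra is free graded-commutative, i.e. a tensor product of polynomial algebras on generators of even degree and exterior algebras on generators of odd degree. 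Finite-dimensionality rules out any even generator, since a polynomial algebra on an even generator is infinite-dimensional, so $H^*(G;\R)\cong\Lambda(x_1,\dots,x_l)$ with every $\deg x_i=n_i$ odd.

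Next I would pin down the sum of the degrees. The unique top nonzero class of $\Lambda(x_1,\dots,x_l)$ is the product $x_1\cdots x_l$, living in degree $n_1+\cdots+n_l$. On the other hand $G$ is a closed, connected, orientable manifold (every Lie group is parallelizable, hence orientable) of dimension $d=\dim G$, so its top nonvanishing cohomology is exactly in degree $d$. Comparing the two gives $n_1+\cdots+n_l=\dim G$.

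The remaining and most delicate point is the identification $l=\operatorname{rank}(G)$ together with the precise odd degrees. For this I would pass to the universal bundle $G\to EG\to BG$ with $EG$ contractible and run the real Serre spectral sequence. By Borel's theorem $H^*(BG;\R)\cong H^*(BT;\R)^{W}$ for a maximal torus $T$ with Weyl group $W$, and by Chevalley's theorem this ring of invariants is itself a polynomial algebra on $\operatorname{rank}(G)=\dim T$ homogeneous generators, of degrees $2d_1,\dots,2d_{\operatorname{rank}(G)}$. Since $EG$ is contractible, the exterior generators $x_i$ of $H^*(G;\R)$ are transgressive and transgress onto these polynomial generators; matching the two families forces $l=\operatorname{rank}(G)$ and $n_i=2d_i-1$, which are indeed odd. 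I expect this classifying-space and transgression step to be the main obstacle: the soft Hopf-algebra argument already delivers the exterior-algebra form and the oddness of the degrees, but tying $l$ to the rank and identifying the degrees requires the genuinely Lie-theoretic input of Borel's and Chevalley's theorems.

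Finally I would compare with the product of spheres. By the K\"unneth theorem $H^*(\SSS^{n_1}\times\cdots\times\SSS^{n_l};\R)\cong\Lambda(\alpha_1,\dots,\alpha_l)$ with $\deg\alpha_i=n_i$, which coincides with $H^*(G;\R)$ degree by degree; dualizing over $\R$ then yields the asserted agreement of the homology spaces. I note that the core of the argument uses only that $G$ is compact and connected; the simple connectivity and simplicity hypotheses are harmless for this statement and are present to match the setting in which the theorem is subsequently applied.
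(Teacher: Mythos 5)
The paper offers no proof for you to match: Theorem~\ref{thm:hopf} is quoted as a classical result with a pointer to the literature (\cite{kumpel}), so your argument must stand on its own --- and it does, being the standard modern proof in correct outline. The soft half is exactly right: $H^*(G;\R)$ is a finite-dimensional connected graded-commutative Hopf algebra, Hopf's structure theorem in characteristic zero makes it free graded-commutative, finite-dimensionality excludes even polynomial generators, so $H^*(G;\R)\cong\Lambda(x_1,\ldots,x_l)$ with all $n_i=\deg x_i$ odd; and since $G$ is a closed orientable manifold of dimension $d$ whose top nonvanishing cohomology sits exactly in degree $d$, while the top class of the exterior algebra is $x_1\cdots x_l$ in degree $n_1+\cdots+n_l$, you get $n_1+\cdots+n_l=\dim G$. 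You also correctly isolate the genuinely Lie-theoretic step, the identification $l=\operatorname{rank}(G)$; two points there should be made explicit rather than implicit. First, transgressivity of the $x_i$ in $G\to EG\to BG$ is not automatic from the setup: it holds because in characteristic zero the exterior generators may be chosen primitive, and primitive classes are transgressive (Borel's transgression theorem), whence $H^*(BG;\R)$ is polynomial on $l$ generators of degrees $n_i+1$. Second, the count $l=\operatorname{rank}(G)$ then follows from Borel's isomorphism $H^*(BG;\R)\cong H^*(BT;\R)^W$ together with Chevalley's theorem, because the number of generators of a graded polynomial algebra is an invariant of the algebra; this also returns $n_i=2d_i-1$, consistent with the oddness already established. (An alternative route avoiding spectral sequences is the original Hopf--Samelson computation of the Poincar\'e polynomial of $G$ via the Weyl integration formula, but your Borel--Chevalley path is standard and sound.) Your closing remark is likewise accurate: compactness and connectedness suffice for the statement as printed, the torus being the basic example, while simplicity and simple connectivity serve the paper's later applications --- simple connectivity additionally forces every $n_i\ge 3$, since $H^1(G;\R)=0$ for such $G$.
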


Therefore the results of this article (Theorems~\ref{thm:mperl}, \ref{thm:pp} and~\ref{thm:main-transversal}) and Hopf's Theorem   yield the following corollaries:

\begin{corollary}\label{coro:ms-lie}
Let $G$ be a Lie group satisfying the hypotheses of Theorem~\ref{thm:hopf} and
$f:G\to G$ be a Morse-Smale diffeomorphisms such that 
$a_{n_1},\ldots, a_{n_l}$, are the basic eigenvalues of $f_{*}$. Then the minimal set of Lefschetz periods is either the empty set or $\{1\}$.
Moreover
\begin{enumerate}
\item If $a_{n_i}=-1$ for all $1\leq i\leq l$ then $\mperL=\{1\}$.
\item If $a_{n_i}=1$ for some $1\leq i\leq l$ then $\mperL=\emptyset$.
\end{enumerate}
\end{corollary}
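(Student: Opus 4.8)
The plan is to obtain the corollary as a direct specialization of Theorem~\ref{thm:mperl}, with the bridge supplied by Hopf's Theorem (Theorem~\ref{thm:hopf}). The guiding observation is that every object entering the definition of $\mperL$---the Lefschetz numbers $L(f^m)$, the Lefschetz numbers of period $m$, and the Lefschetz zeta function $\zf(t)$---is a homological invariant, built solely from the induced maps $f_{*k}$ on $H_k(G,\Q)$ via the factorization~(\ref{eqn:zeta-product}). Any conclusion about $\mperL$ established for a product of spheres therefore depends only on the graded rational homology together with its product structure, and transfers to any space carrying the same homology.

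First I would invoke Theorem~\ref{thm:hopf} to identify $H_*(G,\Q)$ with the rational homology of $\SSS^{n_1}\times\cdots\times\SSS^{n_l}$ for odd integers $n_1<\cdots<n_l$. Because for odd generators both spaces carry the same exterior-algebra structure, the basic-eigenvalue formalism of Propositions~\ref{prop:ev} and~\ref{prop:L-numbers} applies to $f:G\to G$, giving $L(f^m)=\prod_{i=1}^l\bigl(1+(-1)^{n_i}a_{n_i}^m\bigr)$. Since $f$ is Morse-Smale it is quasi-unipotent, so $a_{n_i}=\pm1$, and the computation of $\mperL$ performed in Lemma~\ref{lemma:mperl} goes through without change; in particular $\mperL$ is either $\emptyset$ or $\{1\}$.

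It then remains only to rewrite the two cases of Theorem~\ref{thm:mperl} using that each $n_i$ is odd. Since $(-1)^{n_i}=-1$, the condition $(-1)^{n_i}a_{n_i}=1$ is equivalent to $a_{n_i}=-1$, while $(-1)^{n_i}a_{n_i}=-1$ is equivalent to $a_{n_i}=1$. Substituting these equivalences turns case~(1) of Theorem~\ref{thm:mperl} into the assertion that $a_{n_i}=-1$ for all $i$ forces $\mperL=\{1\}$, and case~(2) into the assertion that $a_{n_i}=1$ for some $i$ forces $\mperL=\emptyset$, which are precisely statements~(1) and~(2) of the corollary. I do not anticipate any genuine obstacle here: the whole of the mathematical content sits in Theorem~\ref{thm:mperl}, and the corollary is merely its restatement through the parity of the dimensions furnished by Hopf's Theorem. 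The single point deserving an explicit word is the homological-invariance remark above, which is what legitimizes applying a product-of-spheres result to the Lie group $G$.
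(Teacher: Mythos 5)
Your proposal is correct and matches the paper's own route exactly: the paper derives Corollary~\ref{coro:ms-lie} directly from Theorem~\ref{thm:mperl} combined with Hopf's Theorem~(Theorem~\ref{thm:hopf}), using precisely the translation $(-1)^{n_i}a_{n_i}=1\iff a_{n_i}=-1$ and $(-1)^{n_i}a_{n_i}=-1\iff a_{n_i}=1$ valid because all $n_i$ are odd. Your explicit remark that $L(f^m)$, $\ell(f^m)$, $\zf(t)$, and hence $\mperL$ are homological invariants---so the product-of-spheres result transfers to $G$---is the one step the paper leaves implicit, and you have filled it in correctly.
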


\begin{corollary}\label{coro:pp-lie}
Let $G$ be a Lie group satisfying the hypotheses of Theorem~\ref{thm:hopf} and
$f:G\to G$ be a $C^1$ map with all its periodic points hyperbolic such that 
$a_{n_1},\ldots, a_{n_l}$, are the basic eigenvalues of $f_{*}$.
If $a_{n_i}\neq 1$ for all $n_i$  and there exists $j\in\{1,\ldots,l\}$
such that $|a_{n_j}|>1$ then $f$ has infinitely many periodic points.
\end{corollary}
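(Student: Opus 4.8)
The plan is to obtain Corollary~\ref{coro:pp-lie} as a direct consequence of Theorem~\ref{thm:pp}, transferred from the product of spheres to $G$ via Hopf's Theorem. First I would invoke Theorem~\ref{thm:hopf} to record that the real (equivalently rational) homology of $G$ coincides with that of $Y:=\SSS^{n_1}\times\cdots\times\SSS^{n_l}$ with all the $n_i$ odd. The crucial structural point is that every homological invariant used in the proof of Theorem~\ref{thm:pp}---the induced maps $f_{*k}$, the eigenvalue description of Proposition~\ref{prop:ev}, the product formula~(\ref{eqn:L-m}) for $L(f^m)$, and the closed form~(\ref{eqn:lzf-product}) of $\zf(t)$---depends only on the graded homology and the basic eigenvalues $a_{n_1},\ldots,a_{n_l}$, and not on the actual homeomorphism type of the underlying space. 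Hence these quantities for $f:G\to G$ are identical to those of a continuous self-map of $Y$ sharing the same basic eigenvalues.

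Next I would observe that because every $n_i$ is odd, one has $(-1)^{n_i}=-1$ for all $i$, so the hypothesis ``$a_{n_i}\neq 1$ for all $n_i$ odd'' of Theorem~\ref{thm:pp} collapses to ``$a_{n_i}\neq 1$ for all $i$'', which is exactly the assumption of the corollary. Together with the existence of some $j$ with $|a_{n_j}|>1$, formula~(\ref{eqn:L-m}) then forces the sequence $\{L(f^m)\}_m$ to be unbounded, precisely as in the proof of Theorem~\ref{thm:pp}.

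Finally I would run the Lefschetz-theoretic argument verbatim: by~\cite[Theorem 2.2]{babo} unboundedness of $\{L(f^m)\}_m$ is equivalent to $\zf(t)$ having a zero or a pole of modulus strictly less than $1$, and by the explicit factorization~(\ref{eqn:lzf-product}) this produces a surviving factor $(1-ct)$ with the integer $|c|>1$, so $\zf(t)$ cannot be written in the Franks form~(\ref{eqn:zeta-franks}). The one point requiring care---and the only place where the fact that $G$ is not literally a product of spheres could matter---is the applicability of Franks' Theorem~\ref{thm:franks}; but that result holds for any $C^1$ map on a compact manifold without boundary, and a compact, connected, simply connected, simple Lie group is such a manifold. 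Thus if $f$ had only finitely many periodic points, all of them hyperbolic, Theorem~\ref{thm:franks} would force $\zf(t)$ into the form~(\ref{eqn:zeta-franks}), a contradiction; hence $f$ has infinitely many periodic points. I expect the main (and rather mild) obstacle to be exactly this verification that no step of the product-of-spheres proof secretly uses more than homology plus the compact-boundaryless-manifold hypothesis, so that the reduction through Hopf's Theorem is fully legitimate.
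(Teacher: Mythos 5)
Your proposal is correct and follows essentially the same route as the paper, which obtains Corollary~\ref{coro:pp-lie} directly from Theorem~\ref{thm:pp} together with Hopf's Theorem~\ref{thm:hopf}, using that all the $n_i$ are odd so the hypothesis ``$a_{n_i}\neq 1$ for all $n_i$ odd'' becomes ``$a_{n_i}\neq 1$ for all $i$''. Your extra care in checking that every ingredient (the homological formulas and Franks' Theorem~\ref{thm:franks}, which only needs a compact boundaryless manifold) transfers to $G$ is exactly the implicit content of the paper's one-line deduction.
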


\begin{corollary}\label{coro:transversal-lie}
Let $G$ be a Lie group satisfying the hypotheses of Theorem~\ref{thm:hopf} and
$f:G\to G$ be a transversal self--map on $G$
  Let $f$ be a transversal self--map
on $G$,  with basic eigenvalues of $f_*$: $a_{n_1},\ldots,a_{n_l}$.
Assume that $a_i\neq 1$.
\begin{enumerate}
\item  If  $|a_{n_j}|>1$ for $1\leq j\leq l$, then there exists $N>0$, such that
\begin{itemize}
\item the number $m$ is in $\mbox{Per}(f)$, if $m>N$ is odd, and 
\item $m$ or $m/2$ is in $\mbox{Per}(f)$, if $m>N$ is even.
\end{itemize}
\item If $a_{n_i}=-1$ and there is $|a_{n_j}|>1$ for $j\in\{1,\ldots,l\}$, then  there exists $N>0$ such that  $m\in\mbox{Per}(f)$, if $m>N$ is odd.
\item If $|a_{n_i}|=1$ for all $1\leq i\leq l$ and there is $a_{n_j}=-1$ for some $n_j$ then $\{m,m/2\}\subset\mbox{Per}(f)$ when $m$ is a power of $2$.
\end{enumerate}
\end{corollary}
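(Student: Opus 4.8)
The plan is to deduce the corollary directly from Theorem~\ref{thm:main-transversal} by transporting it across Hopf's Theorem. The crucial observation is that the Lefschetz numbers $L(f^m)$, the Lefschetz numbers of period $\ell(f^m)$ and the Lefschetz zeta function $\zf(t)$ depend only on the induced maps $f_{*k}$ on rational homology, not on the ambient space. By Theorem~\ref{thm:hopf} the rational homology of $G$ coincides with that of $Y:=\SSS^{n_1}\times\cdots\times\SSS^{n_l}$ with every $n_i$ odd; moreover the rational cohomology of such a $G$ is an exterior algebra on generators of odd degree (the algebra form of Hopf's theorem), so its multiplicative structure matches the K\"unneth structure of $Y$. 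Consequently Proposition~\ref{prop:ev} applies, the basic eigenvalues $a_{n_1},\ldots,a_{n_l}$ of $f_*$ are well defined, and the formula~(\ref{eqn:L-m}) for $L(f^m)$ together with the identity~(\ref{eqn:lQ}) for $\ell(f^m)$ hold verbatim for $f:G\to G$.

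Next I would record that $G$ is a compact $C^1$ manifold without boundary, so that a transversal self-map on $G$ falls under the hypotheses of Theorem~\ref{thm:transversal}, and the conclusions relating the nonvanishing of $\ell(f^m)$ to membership in $\Per(f)$ become available. Hence every assertion of Lemma~\ref{lemma:transversal} about the vanishing and the growth of $\ell(f^m)$ transfers unchanged to $f:G\to G$, and with it the whole of Theorem~\ref{thm:main-transversal}.

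The remaining work is a specialization of Theorem~\ref{thm:main-transversal} to the case in which all the $n_i$ are odd. Under this restriction the standing hypothesis ``there is no $n_i$ odd with $a_i=1$'' reduces to $a_i\neq 1$ for all $i$, matching the hypothesis of the corollary. The parts of the theorem that presuppose an even $n_i$, namely its parts~(2) and~(5), are vacuous here. Part~(1), with $|a_j|>1$ for all $j$, yields part~(1) of the corollary. Part~(3), which asks for some $n_i$ odd with $a_i=-1$ and some $|a_j|>1$, becomes ``some $a_i=-1$ and some $|a_j|>1$'' and gives part~(2). Finally the situation $|a_i|=1$ for all $i$ with some $a_j=-1$ corresponds to part~(4) of the theorem and to case (e-2) of Lemma~\ref{lemma:transversal} (all $n_i$ being odd forces $E^-=\emptyset$ and $O^-\neq\emptyset$), where $\ell(f^m)\neq 0$ precisely for $m=2^s$; applying Theorem~\ref{thm:transversal}(b) to each such even $m$ produces part~(3).

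I expect the only genuine obstacle to be the verification that the basic-eigenvalue decomposition is meaningful on $H_*(G;\Q)$: Theorem~\ref{thm:hopf} as quoted asserts merely an isomorphism of homology \emph{spaces}, whereas Proposition~\ref{prop:ev} and the definition of the basic eigenvalues rely on the product structure that labels homology classes by subsets of $\{n_1,\ldots,n_l\}$. Justifying that this labelling survives for $G$, through the Hopf structure theorem that the rational cohomology ring is exterior on odd generators, is the step that must be argued with care; once it is in place, the rest is the bookkeeping of matching cases described above.
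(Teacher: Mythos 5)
Your proposal is correct and is essentially the paper's own (implicit) argument: the paper obtains Corollary~\ref{coro:transversal-lie} precisely by combining Hopf's Theorem with Theorem~\ref{thm:main-transversal} and specializing to the case where all $n_i$ are odd, so parts (2) and (5) of the theorem become vacuous and the remaining cases match your bookkeeping, with your remark on the exterior-algebra form of Hopf's theorem supplying the ring-structure justification the paper leaves tacit. One small point in your favor: your derivation yields for part (3) the conclusion ``$m$ or $m/2$ is in $\Per(f)$'' exactly as in Theorem~\ref{thm:main-transversal}(4), and the corollary's printed ``$\{m,m/2\}\subset\Per(f)$'' should be read as that disjunction (an apparent slip in the statement), since the argument via $\ell(f^{2^s})\neq 0$ and Theorem~\ref{thm:transversal}(b) cannot give both periods simultaneously.
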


Among the classical Lie groups that satisfy the hypotheses of the Hopf's Theorem are: $SU(n)$, $SO(n)$ and $Sp(n)$.

\item
A general goal of this theory is to extend the results of
 of this article to self-maps on the space
$X=X(n_1,s_1)\times\cdots\times X(n_l,s_l)$, where $X(n_i,s_i):=\underbrace{\SSS^{n_i}\times\cdots\times\SSS^{n_i}}_{s_i}$, with $1\leq n_1<\cdots<n_l$, and $s_i\geq 1$.
It would quite interesting to achieve this goal
We recall that in the particular case of $X=X(n,s)$ for some $n\geq 1$ and $s>1$, the study of the minimal sets Lefschetz periods and the computation of the Lefschetz zeta function for quasi-unipotent maps was done in~\cite{BGS1}.
In the particular situation of the torus, i.e. $n=1$,  the study was done previously  in~\cite{BGMS,G-LL1}.
Regarding the periodic structure of transversal maps for $X=X(n,s)$, for $n\geq 1$ and  $s> 1$, see~\cite{Si:2020-2}.
However we believe  new techniques is required to achieve these goals.
\item
The precise values of the exponent $\alpha$ and $\beta$ in~(\ref{eqn:lzf-qu}) can be computed as follows:
Let 
\begin{eqnarray*}
e(k)&: = &\#\{(j_1,\ldots, j_r)\, :\, j_1+\cdots+j_r=k \text{ and } a_{j_1}\cdots a_{j_r}=1\}
\text { and } \\
o(k)&:=&\#\{(j_1,\ldots, j_r)\, :\, j_1+\cdots+j_r=k \text{ and } a_{j_1}\cdots a_{j_r}=-1\},
\end{eqnarray*}
 where the symbol $\#$ means the cardinality of the set.
From the identity~(\ref{eqn:lzf-prod1}), it follows $\alpha=\sum_{k=1}^N e(k)(-1)^{k+1}-1$
and $\beta=\sum_{k=1}^N e(k)(-1)^{k+1}$.
%\comment{Check}

\item The results of this article can be generalized in the context of some rational exterior spaces. For the definition and basic properties of self-maps on these spaces, see~\cite{haibao,graff:2000}. 
The dynamics of the transversal maps on rational exterior spaces of a given fixed rank, was studied in~\cite{Si:2020-2}.
However, we do not present the generalization of our results in this context since it is out the scope of the present article.
\end{enumerate}

\noindent
\textbf{Availability of data and material:} Data sharing not applicable to this article as no datasets were generated or analyzed during the current study.

\end{document}